\documentclass[11pt]{amsart}
\usepackage{geometry}
\usepackage{amsmath}
\usepackage{amsthm}
\usepackage{amssymb}
\usepackage{graphicx}
\usepackage{mathrsfs}
\usepackage{dsfont}
\usepackage{pifont}
\usepackage{ytableau}
\usepackage{bookmark}
\usepackage{tikz}
\usepackage{youngtab}
\usepackage{upgreek}
\hypersetup{hidelinks}
\graphicspath{{./GTM9/}}
\hypersetup{colorlinks,linkcolor=blue,urlcolor=cyan,citecolor=blue}

\newcommand{\la}{\lambda}
\newcommand{\Ga}{\Ga}

\newcommand{\op}{\operatorname}

\newcommand{\mFS}{\mathbb{F}\mathfrak{S}_n}
\newcommand{\F}{\mathbb{F}}
\newcommand{\mhcn}{\mathfrak{H}^{\mathfrak{c}}_n}
\newcommand{\mpcn}{\mathcal{P}^{\mathfrak{c}}_n}
\newcommand{\I}{\mathbb{I}}

\newtheorem{thm}{Theorem}[section]
\theoremstyle{plain}

\newtheorem{lem}[thm]{Lemma}
\newtheorem{prop}[thm]{Proposition}
\newtheorem{cor}[thm]{Corollary}

\theoremstyle{definition}
\newtheorem{defn}[thm]{Definition}
\newtheorem{example}[thm]{Example}

\theoremstyle{remark}
\newtheorem{rem}[thm]{Remark}

\definecolor{A}{rgb}{.75,1,.75}

\numberwithin{equation}{section}

\title[Symmetric groups and Sergeev superalgebras]{A note on irreducible representations of symmetric groups and Sergeev superalgebras}
\author[Minjia Chen, Jinkui Wan and Hongbo Zhao]{Minjia Chen, Jinkui Wan and Hongbo Zhao}

\address{Minjia Chen,  School of Mathematics and Statistics, Beijing Institute of Technology, Beijing 100081, China}
\email{scarleturanus@163.com}
\address{Jinkui Wan, School of Mathematical Sciences, Shenzhen University, Shenzhen, 518060, P.R. China}
\email{wjk302@hotmail.com}
\address{Hongbo Zhao,  School of Mathematics and Statistics, Beijing Institute of Technology, Beijing 100081, China}
\email{2767451726@qq.com}

\begin{document}
\maketitle

\begin{abstract}
We provide an explicit construction and a closed dimension formula in terms of hook lengths for the irreducible 
representations for the symmetric groups $\mathfrak{S}_p$ and the Sergeev 
superalgebras $\mathcal{Y}_p$ over an algebraically closed field $\mathbb{F}$ 
of characteristic $p>0$.
\end{abstract}

\setcounter{tocdepth}{1}
 \tableofcontents

\section{Introduction}

Let $\mathbb{F}$ be an algebraically closed field of characteristic $p > 0$. Let $\mathfrak{S}_n$ denote the symmetric group on $n$ letters, generated by the adjacent transpositions $s_1, s_2, \ldots, s_{n-1}$. We denote by $\mathcal{Y}_n = \mathcal{C}_n \rtimes \mathbb{F}\mathfrak{S}_n$ the associated {Sergeev superalgebra}, where $\mathcal{C}_n$ is the Clifford algebra over $\mathbb{F}$ generated by $c_1, \ldots, c_n$ subject to the relations $c_k^2 = 1$ and $c_k c_l = -c_l c_k$ for $1 \leq k \neq l \leq n$. It is a classical result that both $\mathbb{F}\mathfrak{S}_n$ and $\mathcal{Y}_n$ are semisimple if and only if $n < p$. In this semisimple situation, the irreducible representations are well-characterized, with established seminormal forms and explicit dimension formulas (cf. \cite{K1, WW}).

However, in the {modular case} where $n \geq p$, the representation theory becomes substantially more complex. For over a century, determining a general dimension formula for the irreducible representations of $\mathbb{F}\mathfrak{S}_n$ has remained a major open problem; a similar challenge persists for the Sergeev superalgebra $\mathcal{Y}_n$.

Significant progress has been made by focusing on specific classes of modules. In \cite{Ma}, Mathieu determined the dimensions of irreducible $\mathbb{F}\mathfrak{S}_n$-modules associated with partitions $\lambda=(\lambda_1,\ldots,\lambda_\ell)$ of $n$ with $\ell(\la)=\ell$ satisfying the condition $\lambda_1-\lambda_\ell-\ell \leq p$ by employing classical Schur-Weyl duality. Subsequently, Kleshchev \cite{K2} demonstrated that these representations are precisely the {completely splittable} modules—those whose restrictions to the subgroup $\mathfrak{S}_k$ remain semisimple for all $k < n$, or equivalently, those upon which the Jucys-Murphy elements act semisimply. These results have been generalized to degenerate affine Hecke algebras by Ruff \cite{Ru}, to affine Hecke algebras of type A \cite{Ch, Ra}, and to Khovanov-Lauda-Rouquier algebras \cite{KR}. Furthermore, the second author extended these results to degenerate affine Hecke-Clifford algebras; as a byproduct, an explicit construction and dimension formula for the completely splittable representations of $\mathcal{Y}_n$ (for $p \neq 2$) were obtained using special standard Young tableaux in \cite{Wa}.

The present note provides a closed dimension formula and an explicit construction for the irreducible representations of $\mathbb{F}\mathfrak{S}_n$ and $\mathcal{Y}_n$ specifically in the case $n=p$. Our approach is based on the observation that when $n \leq p$, {all} irreducible representations of $\mathbb{F}\mathfrak{S}_n$ and $\mathcal{Y}_n$ are completely splittable. More specifically, by comparing the parametrization set of irreducible modules with that of completely splittable representations, we observe that these two sets coincide if and only if $n \leq p$. 

Under the assumption $n \leq p$, we prove that the combinatorial dimension formulas for these irreducible modules can be expressed explicitly in terms of the {hook lengths} associated to the corresponding Young diagrams. Notably, in the non-semsimple case where $n=p$, we obtain a closed dimension formula for all irreducible representations of $\mathfrak{S}_p$ (resp. $\mathcal{Y}_p$) and further give the explicit action of the generators $s_1,s_2,\ldots,s_{p-1}$ of $\mathfrak{S}_p$ (resp. generators $c_1,\ldots,c_p,s_1,\ldots,s_{p-1}$ of  $\mathcal{Y}_p$) on the underlying vector spaces. Inspired by this work, in the forthcoming article \cite{CW}, we develop an analogous construction for Hecke-Clifford algebra $\mathcal{H}_n(q)$ which is the $q$-analog of $\mathcal{Y}_n$ and as an application we then further establish a semi-simplicity criterion for $\mathcal{H}_n(q)$ at roots of unity.

The paper is organized as follows. In Section \ref{sec:symmetric}, we recall preliminary results regarding the symmetric group and derive a closed dimension formula for irreducible modules in terms of hook lengths, alongside a construction of these modules via the detailed action of the generators $s_1, \ldots, s_{p-1}$ in the case $n= p$. In Section \ref{sec:Sergeev}, we provide an analogous dimension formula and explicit construction for the irreducible representations of the Sergeev superalgebra.

\section{Irreducible representations of symmetric group $\mathfrak{S}_p$}\label{sec:symmetric}
In this section, we will derive a closed dimension formula and an explicit construction for irreducible representations of $\mathbb{F}\mathfrak{S}_p$ over an algebraically closed field $\mathbb{F}$ of characteristic $p>0$. 

\subsection{Basics on partitions and Specht modules} 
Let $\mathcal{P}(n)$ denote the set of partitions of $n$. For a partition $\la=(\la_1,\la_2,\dots)\in \mathcal{P}(n)$, we always assume $\la_i\geq\la_{i+1}\geq 0$ and $\sum_i\la_i=n$. Set $\ell(\la)=\sharp\{i\mid \la_i>0\}$. We can also write a partition as $\la=(1^{a_1}2^{a_2}\cdots)$ with $a_i$ being the number of  parts of $\la$ equal to $i$ for $i\geq 1$. 
It is known that any partition $\la \in \mathcal{P}(n)$ can be identified with its Young diagram, that is $\la=\{(i,j)\in\mathbb{Z}^2 \mid 1\leq i\leq \ell(\la), 1\leq j\leq \la_i\}$. 
For $\la \in \mathcal{P}(n)$, the conjugate partition $\la^\prime$ is defined by $\la^\prime_j=\op{max}\{i:\la_i\geq j\}$. The $(i,j)$-hook of $\la$ is the set of nodes in the Young diagram that are either in the same row as $(i,j)$ and to the right of $(i,j)$, or in the same column as $(i,j)$ and below $(i,j)$, including $(i,j)$ itself. The hook length of the $(i,j)$-hook is defined as $h_\la(i,j)=\la_i+\la^\prime_j-i-j+1$, which equivalently counts the number of nodes in the $(i,j)$-hook. 
Denote by $\mathcal{T}(\la)$ the set of tableaux of shape $\la$; that is, a tableau is a labelling of the nodes in the Young diagram $\la$ with the entries $1,2,\dots, n$. A tableau $T$ is called \emph{standard} if its entries strictly increase from left to right along each row and down each column. We denote by $\op{Std}(\la)$ the subset of $\mathcal{T}(\la)$ consisting of standard tableaux of shape $\la$. We then have the following remarkable hook length formula (cf. \cite[Chapter I, Section 5, Example 2]{Mac}):
\begin{equation}\label{eq:hook}
f^\la:=\sharp \op{Std}(\la)=\frac{n!}{\prod_{(i,j)\in\la} h_\la(i,j)}
\end{equation}
where the product in the denominator is over all nodes in the Young diagram $\la$.

\subsection{Irreducible  $\mathbb{F}\mathfrak{S}_n$-modules in the case $n=p$}

Let $\mathcal{P}_p(n)$ be the set of $p$-regular partition of $n$, that is, the subset of $\mathcal{P}(n)$ consisting of $\la$ such that no part of $\la$ is repeated more than or equal to $p$ times. The element in $\mathcal{P}_p(n)$ can also be written in the form
\[
\mathcal{P}_p(n)=\{\la=(1^{a_1}2^{a_2}\cdots)\in\mathcal{P}(n)\mid 0\leq a_k<p \text{ for all } k\}. 
\]
It is known that for each $\la \in \mathcal{P}(n)$, there exists a {\em Specht module} $S^\la$ of dimension $f^\la$, which further admits a symmetric $\mathfrak{S}_n$-invariant inner product $(~,~)$. The radical of this inner product, denoted by $\text{Rad}^\la$, is a $\mathfrak{S}_n$-submodule of $S^\la$. We refer the reader to \cite{Ja} for details regarding the construction of $S^\la$ and the inner product $(~,~)$, which we omit here as they are not required for our current purposes. Set
$$
D^\la=S^\la/\text{Rad}^\la. 
$$
\begin{thm}\cite{Ja} For $\la\in\mathcal{P}(n)$, $D^\la\neq 0$ if and only if $\la\in\mathcal{P}_p(n)$. Moreover, the set $\{D^\la\mid \la\in\mathcal{P}_p(n)\}$ is a complete set of non-isomorphic irreducible $\mFS$-modules.
\end{thm}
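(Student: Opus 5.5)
This is James's classical theorem. I would reprove it along the lines of \cite{Ja}, using the explicit polytabloid model of $S^\la$ inside the permutation module $M^\la$. Recall that $M^\la$ is spanned by $\la$-tabloids and carries the symmetric $\mathfrak{S}_n$-invariant form in which the tabloids are orthonormal. The Specht module $S^\la$ is spanned by the polytabloids $e_t=\kappa_t\{t\}$, where $\kappa_t=\sum_{\sigma\in C_t}\op{sgn}(\sigma)\sigma$ is the signed column sum. The form $(~,~)$ on $S^\la$ is the restriction of this form, so $\text{Rad}^\la=S^\la\cap (S^\la)^\perp$ is automatically a submodule by invariance.

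The first key step is James's submodule theorem. For any $u\in M^\la$ one has $\kappa_t u\in \F e_t$; this follows from the combinatorial lemma that $\kappa_t\{t'\}$ is either $0$ or $\pm e_t$. I would deduce that every submodule $U\subseteq M^\la$ satisfies either $U\supseteq S^\la$ or $U\subseteq (S^\la)^\perp$. Indeed, if some $u\in U$ has $\kappa_tu\neq 0$, then $e_t\in U$ and hence $S^\la\subseteq U$. Otherwise, for all $u\in U$ and all $t$,
$$(u,e_t)=(u,\kappa_t\{t\})=(\kappa_t u,\{t\})=0,$$
using $\kappa_t^*=\kappa_t$.

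Applying this inside $S^\la$, any proper submodule of $S^\la$ lies in $\text{Rad}^\la$. Hence $D^\la$, when nonzero, is irreducible with simple head; over the algebraically closed field $\F$ it is absolutely irreducible and self-dual.

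For non-isomorphism, I would use the dominance lemma. If $\theta:S^\la\to M^\mu/U$ is nonzero and $S^\la\not\subseteq\ker$ behaves as above, then $\la\trianglerighteq\mu$. This comes from the standard fact that $\kappa_t\{t'\}\neq0$ for $t$ of shape $\la$ and $t'$ of shape $\mu$ forces $\la\trianglerighteq\mu$. Suppose $D^\la\cong D^\mu$ with both nonzero. Since $D^\mu$ is a quotient of $S^\mu\subseteq M^\mu$, self-duality of $M^\mu$ gives a nonzero map $S^\la\to M^\mu/U$ not killing $S^\la$, and the lemma yields $\la\trianglerighteq\mu$. By symmetry $\mu\trianglerighteq\la$, so $\la=\mu$.

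It remains to identify exactly when $D^\la\neq0$ and to prove completeness. For the nonvanishing criterion, I would compute $(e_t,e_t)$. This gram entry equals $\prod_i a_i!^{\,?}$ up to sign only in special cases, so instead I would use James's more refined computation: $D^\la\neq0$ if and only if $\kappa_t\rho_t\kappa_t\neq0$ on $\{t\}$, where $\rho_t$ is the row sum. One shows the relevant scalar $(e_t,e_t)$ after averaging equals, up to sign, $\prod_k a_k!$ times a factor prime to $p$ for suitably chosen tableaux. This is nonzero mod $p$ exactly when every $a_k<p$, that is, when $\la$ is $p$-regular. Conversely, if some $a_k\ge p$, I would show $(e_t,e_{t'})\equiv0$ for all $t,t'$: the $a_k$ equal columns can be permuted among themselves, contributing a factor $a_k!$ to every inner product.

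Completeness is then a counting argument. The number of irreducible $\mFS$-modules equals the number of $p$-regular conjugacy classes of $\mathfrak{S}_n$, that is, partitions of $n$ with no part divisible by $p$. By Glaisher's bijection, these are equinumerous with $\mathcal{P}_p(n)$. Since the $D^\la$ for $\la\in\mathcal{P}_p(n)$ are pairwise non-isomorphic irreducibles, they exhaust all of them.

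I expect the main obstacle to be the sharp nonvanishing criterion, namely showing that some gram determinant entry is nonzero for $p$-regular $\la$. Here I would follow James's argument via the semistandard homomorphism count, or equivalently verify that $\prod_k a_k!$ divides all $(e_t,e_{t'})$ with equality attained for a suitable pair.
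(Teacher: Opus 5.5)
\textbf{Verdict: genuine gap.} The paper gives no proof of this statement; it is quoted from \cite{Ja}, and your outline follows James's architecture. The submodule theorem, the irreducibility of $D^\la$ when nonzero, the non-isomorphism argument and the Brauer--Glaisher count are fine, with two small repairs. First, the dominance lemma must assume $D^\la\neq0$. That hypothesis is what lets you write $e_t=c^{-1}\kappa_t x$ with $x\in S^\la$, and without it the lemma fails: for $p=2$ the trivial module $S^{(1^2)}$ maps onto $M^{(2)}$. Second, no self-duality is needed, since $S^\la\twoheadrightarrow D^\la\cong D^\mu\hookrightarrow M^\mu/\mathrm{Rad}^\mu$ is already the required map. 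The real gap is the implication $\la\in\mathcal{P}_p(n)\Rightarrow D^\la\neq0$, on which your completeness count also rests. Without it you only know that the nonzero $D^\la$ are pairwise non-isomorphic, not that there are $\sharp\mathcal{P}_p(n)$ of them.

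\textbf{Your route to this implication does not work, and the missing idea is an explicit pair of tableaux.}

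Your criterion ``$D^\la\neq0$ iff $\kappa_t\rho_t\kappa_t\{t\}\neq0$'' is false. One has $\kappa_t\rho_t\kappa_t\{t\}=\big(\sum_{\pi\in R_t}(e_{\pi t},e_t)\big)e_t=\prod_{(i,j)\in\la}h_\la(i,j)\,e_t$. So for $\la=(n)$ with $n\geq p$ it vanishes, although $D^{(n)}$ is the trivial module.

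Your closing claim that some $(e_t,e_{t'})$ equals $\pm\prod_k a_k!$ is also false in general. For $\la=(2,2,2)$ every Gram entry is $\pm12$ or $\pm36$.

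In the vanishing direction, the factor $\prod_k a_k!$ does not come from columns. It comes from permuting the contents of the $a_k$ \emph{rows} of length $k$. This acts freely on the tabloids common to $e_t$ and $e_{t'}$, and changes both coefficients by $(-1)^k$.

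Here is the pair you need. Let $t'$ be $t$ with every row reversed.
Take a row of a common tabloid $\{s\}$ in the bottom block of $\la$, of length $m$. It contains exactly one entry $t(i_c,c)$ from each of the first $m$ columns of $t$. These entries lie in columns $\la_{i_c}+1-c\geq m+1-c$ of $t'$, and those indices must be a permutation of $1,\ldots,m$. Comparing sums forces $\la_{i_c}=m$ for every $c$.

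Peeling off blocks inductively, the tabloids common to $e_t$ and $e_{t'}$ are exactly those obtained by permuting each column segment within each block of equal rows. Each such tabloid is $\sigma\{t\}=\sigma\{t'\}$ with $\sigma\in C_t\cap C_{t'}$, so its coefficient product is $+1$. Hence
\[
(e_t,e_{t'})=\prod_k(a_k!)^k,
\]
which is prime to $p$ precisely when $\la\in\mathcal{P}_p(n)$.
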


Define the Jucys-Murphy elements in $\mFS$ as follows:
\[L_1=0,\quad L_k=\sum_{1\leq m<k}(m,k),~2\leq k\leq n.\]
An irreducible $\mathbb{F}\mathfrak{S}_n$-module is called \emph{completely splittable} if the Jucys-Murphy elements $L_1,L_2,\ldots,L_n$ act semisimply.  For $\la=(\la_1,\la_2,\cdots,\la_\ell)\in\mathcal{P}_p(n)$ of $n$ with $\ell(\la)=\ell$, let
\[\chi(\la)=\la_1-\la_\ell+\ell.\]
Then $\chi(\la)$ coincides with a special hook length, that is,  $\chi(\la)=h_\la(1,\la_r)$. 
\begin{example}
Let $\la=(8,7,7,5,3,3)$. Then $\chi(\la)=11$ and it corresponds to the $(1,3)$-hook as follows
\[\ydiagram{8,7,7,5,3,3}*[\bullet]{2+6,2+1,2+1,2+1,2+1,2+1}\]
\end{example}

For $T\in\op{Std}(\la)$ with $\la\in\mathcal{P}(n)$, let $T_{(i,j)}$ denote the entry in the node $(i,j)\in\la$. 
Let 
$$
\mathcal{CP}_p(n)=\{\la\in \mathcal{P}_p(n)|\chi(\la)\leq p\}.
$$
For $\la \in \mathcal{CP}_p(n)$,  a standard tableau $T\in{\rm Std}(\la)$ is called \emph{$p$-standard} if any two of its entries $T_{(i,j)}$ and $T_{(i',j')}$ in the node $(i,j)$ and $(i',j')$ with $i>i^\prime,j<j^\prime$ and $i+j^\prime+1-i^\prime-j=p$ satisfy $T_{(i^\prime,j^\prime)}>T_{(i,j)}$. Denote by $\op{Std}_p(\la)$ the set of $p$-standard tableaux of shape $\la$ for each $\la\in\mathcal{CP}_p(n)$, that is 
\begin{align}
\op{Std}_p(\la)=\Big\{T\in\op{Std}(\la)\mid &T_{(i^\prime,j^\prime)}>T_{(i,j)} \text{ for any }(i,j),(i',j')\in\la\text{ with }\notag\\
&\quad i>i^\prime,j<j^\prime, i+j^\prime+1-i^\prime-j=p\Big\}. \label{eq:Stdp}
\end{align}
\begin{example}
Take $p=5,\la=(4,3,1)$. Then the tableau
\[T=\ytableaushort{1256,378,4}\]
is $p$-standard. But  the tableau 
\[s_4T=\ytableaushort{1246,378,5}\]
is standard but not $p$-standard. 
\end{example}

\begin{thm}\cite[Theorem 2.1 and Corollary 2.4]{K2}\label{thm:K2}
Let $\la\in\mathcal{P}_p(n)$. Then $D^\la$ is completely splittable if and only if $\la \in \mathcal{CP}_p(n)$. Moreover, if $\la \in \mathcal{CP}_p(n)$, then
\[\op{dim} D^\la=\sharp \op{Std}_p(\la).\]
\end{thm}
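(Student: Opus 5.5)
The strategy is to go through Jucys--Murphy weights and Okounkov--Vershik/Kleshchev-style branching, combined with Mathieu's Schur--Weyl computation.

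\emph{Step 1: weight decomposition.} Let $M$ be a completely splittable irreducible module. Then $M=\bigoplus_{\underline a} M_{\underline a}$, where $\underline a=(a_1,\dots,a_n)\in\mathbb{F}^n$ runs over the simultaneous eigenvalue tuples of $L_1,\dots,L_n$. The degenerate affine Hecke relations $s_kL_k-L_{k+1}s_k=-1$ constrain these weight spaces:
\begin{itemize}
\item if $a_{k+1}=a_k\pm1$, then $s_k$ acts on $M_{\underline a}$ by $\pm1$;
\item $a_k=a_{k+1}$ is impossible;
\item otherwise $s_k-(a_{k+1}-a_k)^{-1}$ maps $M_{\underline a}$ isomorphically onto $M_{s_k\underline a}$.
\end{itemize}
An induction on $n$ (restrict to $\mathfrak{S}_{n-1}$, which is semisimple on $M$) shows that every weight space is one-dimensional. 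It also shows that the weights are residue sequences $(\mathrm{res}\,T_1,\dots,\mathrm{res}\,T_n)$, with residues $j-i \bmod p$, of standard tableaux $T$ of some shape $\lambda$. The admissible tableaux are exactly those whose successive residue sequences avoid the forbidden configurations: two equal residues adjacent in the sequence with nothing separating them.

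\emph{Step 2: the criterion $\chi(\lambda)\le p$.} Following Kleshchev, we use modular branching: $D^\lambda$ restricted to $\mathfrak{S}_{n-1}$ has simple socle components $D^{\tilde e_i\lambda}$, and $\mathrm{Res}$ is semisimple iff each $e_iD^\lambda$ is irreducible. We then show by induction that $D^\lambda$ is completely splittable iff each $i$-removable configuration is ``normal'' in a strong sense at every stage. This is equivalent to the hook $h_\lambda(1,\lambda_\ell)=\chi(\lambda)$ being at most $p$. If $\chi(\lambda)>p$, we exhibit a restriction to some $\mathfrak{S}_k$ containing a nonsplit self-extension (a two-dimensional Jordan block of some $L_k$), coming from two nodes of equal residue that can be simultaneously removable. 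Conversely, if $\chi(\lambda)\le p$, all contents in the rim differ mod $p$ except possibly pairs at distance exactly $p$. An induction using $\chi(\mu)\le\chi(\lambda)$ for $\mu\subset\lambda$ obtained by removing a node then yields semisimplicity.

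\emph{Step 3: dimension count.} Construct a module $V$ with basis $\{v_T : T\in\mathrm{Std}_p(\lambda)\}$ and seminormal action
\[
s_kv_T=\frac{1}{c_{k+1}-c_k}v_T+\Bigl(1-\frac{1}{(c_{k+1}-c_k)^2}\Bigr)v_{s_kT},
\]
where $c_k$ is the content of $k$ in $T$ and $v_{s_kT}=0$ if $s_kT\notin\mathrm{Std}_p(\lambda)$. The $p$-standard condition ($i+j'+1-i'-j=p$) is exactly where the content difference is $\pm1$ mod $p$ although the nodes are not adjacent. It is precisely what makes the coefficient $1-(c_{k+1}-c_k)^{-2}$ vanish consistently, so that $\mathrm{Std}_p(\lambda)$ is closed under the allowed moves. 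Next, we check the Coxeter relations, as in the characteristic-zero computation. We then show $V$ is irreducible: the weights are distinct, and the tableau graph on $\mathrm{Std}_p(\lambda)$ is connected. Finally, we identify $V\cong D^\lambda$ by exhibiting the top weight, namely the row reading tableau, which is $p$-standard and has the same residue as the highest weight of $D^\lambda$. This gives $\dim D^\lambda=\sharp\mathrm{Std}_p(\lambda)$.

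The main obstacle is Step 2's converse direction, and connectedness of $\mathrm{Std}_p(\lambda)$ under admissible transpositions. For connectedness, I would first try showing that every $p$-standard tableau can be brought to the row-reading tableau by moving the largest entry to its final position without violating \eqref{eq:Stdp}.
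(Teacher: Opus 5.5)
The paper gives no proof of this statement. It is quoted from Kleshchev [K2], whose argument is the modular branching rule iterated down to $\mathfrak{S}_1$: $\operatorname{Res}D^\lambda$ is semisimple iff $\varepsilon_i(\lambda)\le 1$ for all $i$, and then it equals $\bigoplus_{A\text{ normal}}D^{\lambda_A}$. The seminormal action is taken separately from Ruff. Your outline has the right overall architecture, but several of its concrete claims are false.

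\textbf{Step 3.} Your formula is not a representation. Applying it to $T$ and to $s_kT$, where the content difference changes sign, gives $s_k^2v_T=\bigl(\rho^2+(1-\rho^2)^2\bigr)v_T$ with $\rho=(c_{k+1}-c_k)^{-1}$. This is not $v_T$ unless $\rho^2\in\{0,1\}$. You need $\sqrt{1-\rho^2}$ in both directions, as in the formula the paper quotes from Ruff, or coefficient $1$ one way and $1-\rho^2$ the other.

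More seriously, the row-reading tableau is in general not $p$-standard, because it fills the up-right node of a pair before the down-left one. For $p=3$, $\lambda=(2,2)$, the pair $(2,1),(1,2)$ has $i+j'+1-i'-j=3$. So $\operatorname{Std}_3(\lambda)$ is the single column-reading tableau, matching $D^{(2,2)}=\mathrm{sgn}$. The row-reading residue sequence $(0,1,2,0)$ is instead the weight of the trivial module $D^{(4)}$. Hence both your identification $V\cong D^\lambda$ and your connectedness strategy target the wrong tableau.

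The column-reading tableau is always $p$-standard, and its reversal removes good nodes. Even so, concluding that $D^\lambda$ has this weight and is determined by it requires exactly Kleshchev's branching theorem ($D^\lambda\cong\tilde f_iD^{\lambda_A}$ for $A$ good). There is no free-standing ``highest weight of $D^\lambda$'' to compare with.

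\textbf{Step 2.} The monotonicity $\chi(\mu)\le\chi(\lambda)$ is false. Removing $(\ell,\lambda_\ell)$ with $\lambda_\ell\ge 2$ raises $\chi$ by one; for example $(3,2)\to(3,1)$ takes $\chi$ from $3$ to $4$. The induction must run only over normal nodes:
\begin{itemize}
\item If $\chi(\lambda)\le p$, the removable nodes have contents in $[\lambda_\ell-\ell,\lambda_1-1]$, an interval of $\chi(\lambda)\le p$ integers. So they have distinct residues and $\varepsilon_i(\lambda)\le 1$.
\item When $\chi(\lambda)=p$, the node $(\ell,\lambda_\ell)$ is not normal. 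The addable node $(1,\lambda_1+1)$ has content exactly $p$ larger, and no removable node in between has that residue.
\end{itemize}
Hence $\chi(\lambda_A)\le p$ for every normal $A$, which gives the ``if'' direction.

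The ``only if'' direction is only asserted. You must actually produce a chain of normal removals from $\lambda$ to some $\mu$ with $\varepsilon_i(\mu)\ge 2$; then $D^\mu$ is a composition factor of the restriction. Next, $e_i^2D^\mu\neq 0$ yields a generalized weight with two equal consecutive entries. Your own Step~1 observation ($a_k\neq a_{k+1}$) forbids this under Jucys--Murphy semisimplicity.

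\textbf{Step 1.} Your other claims here do not follow from the local relation $s_kL_k-L_{k+1}s_k=-1$. The two-dimensional induced module of the rank-two degenerate affine Hecke algebra, with weights $(a,a+1)$ and $(a+1,a)$, has $L_k,L_{k+1}$ semisimple. Yet $s_k$ does not act by $\pm1$ on the $(a,a+1)$-vector. Also, semisimplicity of $M|_{\mathfrak{S}_{n-1}}$ is part of what must be proved, not a hypothesis you may invoke.
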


Suppose $\la\in\mathcal{CP}_p(n)$. For a node $A=(i,j)$ in a Young diagram $\la$, define the residue of $A$ to be ${\rm res}(A)=j-i \mod p$. For $1\leq k\leq n$ and $T\in{\rm Std}_p(\la)$, set $T(k)$ be the node in $T$ occupied by $k$.  It is straightforward to check that $\op{res}(T(a+1))\neq \op{res}(T(a))$ if  $T\in{\rm Std}_p(\la)$ and hence 
$$
\rho_a(T):=\frac{1}{\op{res}(T(a+1))-\op{res}(T(a))} 
$$
is well defined for each $1\leq a\leq n-1$. 

\begin{thm}\cite[Theorem 4.9]{Ru}
Suppose $\la\in\mathcal{CP}_p(n)$. There exists an $\mathbb{F}\mathfrak{S}_n$-module isomorphism 
$$D^\la\cong \oplus_{T\in\op{Std}_p(\la)}\mathbb{F}v_T \text{ as a vector space}, $$
where the action of simple transposition $s_i\in \mathfrak{S}_n$ for $1\leq i\leq n-1$ is given as follows:
\begin{equation}\label{eq:sk-action}
s_i v_T=\left\{
\begin{array}{ll}
\rho_i(T)v_T+\sqrt{1-\left(\rho_i(T)\right)^2}v_{s_i T},&\text{ if }s_iT \in{\rm Std}_p(\la),\\
\rho_{i}(T)v_T,&\text{ otherwise}
\end{array}
\right.
\end{equation}
for each $T\in{\rm Std}_p(\la)$.  
\end{thm}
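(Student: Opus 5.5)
The plan is to verify the Coxeter relations directly on the space $V^\la:=\bigoplus_{T\in\op{Std}_p(\la)}\mathbb{F}v_T$ with the operators defined by \eqref{eq:sk-action}. Then I would identify the resulting module with $D^\la$ through its Jucys--Murphy weights, combined with the dimension count of Theorem \ref{thm:K2}.

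\textbf{Step 1: preliminary combinatorics.} First I record two facts about a $p$-standard $T$.

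First, $\op{res}(T(i))\neq\op{res}(T(i+1))$. If $i,i+1$ lie in the same row or column, the residues differ by $\pm1$, and $p\geq 2$ makes them distinct mod $p$. If the nodes have content difference $\equiv 0 \pmod p$, the constraint $\chi(\la)\le p$ forces them to sit in positions $(i,j),(i',j')$ with $i+j'+1-i'-j=p$. The $p$-standard condition then fixes their relative order, so the case where $i$ and $i+1$ occupy them in the wrong order is excluded.

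Second, when $s_iT$ is standard but not $p$-standard, the two nodes have content difference exactly $\pm1 \pmod p$ in a forbidden configuration. In that case $\rho_i(T)=\pm1$, so the coefficient $\sqrt{1-\rho_i(T)^2}$ vanishes. The formula is therefore consistent whether we "drop" $v_{s_iT}$ or not. Likewise, if $s_iT$ is not standard then $i,i+1$ are adjacent in a row or column and $\rho_i(T)=\pm1$.

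Finally, note $\rho_i(s_iT)=-\rho_i(T)$.

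\textbf{Step 2: the relations.} On the two-dimensional span of $v_T,v_{s_iT}$ the matrix of $s_i$ is
\[\begin{pmatrix}\rho & \sqrt{1-\rho^2}\\ \sqrt{1-\rho^2} & -\rho\end{pmatrix},\]
which squares to the identity. On one-dimensional pieces $s_i$ acts by $\rho=\pm1$, so $s_i^2=1$ holds everywhere. For $|i-j|>1$, the operators $s_i$ and $s_j$ act on disjoint pairs of entries and commute.

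The braid relation $s_is_{i+1}s_i=s_{i+1}s_is_{i+1}$ reduces to the span of $\{v_{wT}\}$ for $w\in\langle s_i,s_{i+1}\rangle$, which has dimension at most $6$. This is the classical Young seminormal computation. It depends only on the three residues $a,b,c$ of $T(i),T(i+1),T(i+2)$, through the rational identity
\[\tfrac{1}{b-a}+\tfrac{1}{c-b}\cdot\ldots\]
which holds over $\mathbb{Q}(a,b,c)$ and hence over $\mathbb{F}$, since every denominator is nonzero by Step 1. The one delicate point is the degenerate sub-cases, where some $w T$ is not $p$-standard. There I would check that the corresponding off-diagonal coefficient is $0$, so the generic identity specializes correctly. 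I expect this to be the main obstacle. I would handle it by showing that the generic $6\times6$ (or $3\times3$) matrices preserve the subspace spanned by the $p$-standard $v_{wT}$ exactly when the vanishing coefficients of Step 1 occur.

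\textbf{Step 3: identification with $D^\la$.} By induction using the branching of $s_{k}$, each $L_k$ acts on $v_T$ as $\op{res}(T(k))v_T$ plus terms that cancel. Concretely, one verifies $L_{k+1}=s_kL_ks_k+s_k$ on the basis, and this gives $L_kv_T=\op{res}(T(k))v_T$. Distinct $p$-standard tableaux have distinct residue sequences, because a residue sequence determines the tableau once $\chi(\la)\le p$.

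Next I show that $V^\la$ is irreducible. Any nonzero submodule is a sum of weight spaces, so it contains some $v_T$. Every $p$-standard tableau is reachable from $T$ through chains of $s_i$-moves staying inside $\op{Std}_p(\la)$, each with nonzero coefficient $\sqrt{1-\rho^2}$. Hence the submodule contains all $v_{T'}$.

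The weights occurring in $V^\la$ coincide with those of $D^\la$, namely the residue sequences of $\op{Std}_p(\la)$ by \cite{K2}. Since irreducible modules are determined by their formal characters with respect to the $L_k$, we conclude $V^\la\cong D^\la$. The dimensions match by Theorem \ref{thm:K2}.
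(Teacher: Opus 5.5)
The paper does not prove this statement; it quotes it from \cite{Ru}, where it comes out of the classification of completely splittable modules of the degenerate affine Hecke algebra, pulled back along $x_k\mapsto L_k$. A direct seminormal-form verification is a legitimate alternative. However, your proposal has a genuine gap in Step 2: the hypothesis $\chi(\la)\le p$ is never actually used there, and the braid relation is false without it.

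The orbit of $T$ under $\langle s_i,s_{i+1}\rangle$ inside $\op{Std}_p(\la)$ has size $6$, $3$, $2$ or $1$, not only $6$ or $3$. In the first three cases the residues $a,b,c$ of $T(i),T(i+1),T(i+2)$ are pairwise distinct, since each pair is consecutive in some $p$-standard tableau of the orbit. So your specialization argument works there. The size-one case, where neither $s_iT$ nor $s_{i+1}T$ is $p$-standard, is different. There both $\rho_i(T)$ and $\rho_{i+1}(T)$ are $\pm1$, and the braid relation on $v_T$ reduces to $\rho_i(T)=\rho_{i+1}(T)$. This fails exactly when $a=c$. That is precisely where the generic identity has the vanishing denominator $c-a$, so observing that off-diagonal coefficients vanish gives you nothing.

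Step 1 does not exclude this case. Take $p=3$, $\la=(3,1)$ (so $\chi(\la)=4>p$), and $T$ with rows $1\,3\,4$ and $2$. Then:
\begin{itemize}
\item $T$ is $3$-standard, and its residues $0,2,1,2$ have distinct consecutive entries;
\item $s_2T$ is standard but not $3$-standard, with $\rho_2(T)=-1$;
\item $s_3T$ is not standard, with $\rho_3(T)=1$;
\item hence $s_2s_3s_2v_T=v_T\neq -v_T=s_3s_2s_3v_T$.
\end{itemize}

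So you need a lemma that genuinely uses $\chi(\la)\le p$. Suppose $(T(i),T(i+1))$ is a forbidden pair (south-west node first) and $T(i+2)$ lies immediately right of $T(i+1)$. Then one of two things happens:
\begin{itemize}
\item the node right of $T(i)$ lies in $\la$ and forms with $T(i+2)$ a forbidden pair in the wrong order; or
\item $T(i)$ ends its row. Then its column $c$ satisfies $c\ge\la_\ell$, and the hook of $\la$ at the node in the row of $T(i+1)$ and the column of $T(i)$ has length at least $p+1$. This contradicts $h_\la(r,c)\le\chi(\la)$, which holds for all $c\ge\la_\ell$.
\end{itemize}
The mirror configuration is ruled out by the node left of $T(i+2)$ together with $T(i)$. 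The purely adjacent configurations are the classical impossible L-shapes.

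There are further problems.

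In Step 1, the condition $i+j'+1-i'-j=p$ means content difference $p-1$, i.e.\ residues differing by one (as you yourself use a paragraph later), not zero. So your argument for $\op{res}(T(a))\neq\op{res}(T(a+1))$ is wrong as written. It must be replaced by a hook-length and neighbouring-node argument of the kind above.

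In Step 3 you assert three things without proof:
\begin{itemize}
\item residue sequences separate the elements of $\op{Std}_p(\la)$;
\item $\op{Std}_p(\la)$ is connected under moves $T\mapsto s_iT$ staying inside $\op{Std}_p(\la)$;
\item $\rho_i(T)\neq\pm1$ whenever $T$ and $s_iT$ are both $p$-standard.
\end{itemize}
These are the substantive combinatorics behind Ruff's theorem, and their proofs again rest on $\chi(\la)\le p$ and $p$-regularity.

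Finally, Theorem \ref{thm:K2} as quoted gives only $\dim D^\la$, not the set of $L_k$-weights of $D^\la$. So identifying your irreducible module with $D^\la$, rather than some other $D^\mu$, requires more from \cite{K2} or \cite{Ru} than you cite.
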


\begin{lem}\label{lem:equal}
$\mathcal{CP}_p(n)=\mathcal{P}_p(n)$ if and only if $n\leq p$. Hence, all irreducible $\mathbb{F}\mathfrak{S}_n$-modules are completely splittable if and only if $n\leq p$.
\end{lem}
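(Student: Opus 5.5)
The plan is to prove the combinatorial equality $\mathcal{CP}_p(n)=\mathcal{P}_p(n)$ if and only if $n\leq p$. The second sentence then follows at once from Theorem \ref{thm:K2} together with the classification theorem of \cite{Ja}. Indeed, every irreducible module is some $D^\la$ with $\la\in\mathcal{P}_p(n)$, and $D^\la$ is completely splittable exactly when $\la\in\mathcal{CP}_p(n)$. Throughout, $\mathcal{CP}_p(n)\subseteq\mathcal{P}_p(n)$ holds by definition, so only the reverse inclusion is at stake.

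For $n\leq p$, I will bound $\chi(\la)=\la_1-\la_\ell+\ell$ for an arbitrary $\la\in\mathcal{P}_p(n)$ with $\ell=\ell(\la)$. Row $1$ has $\la_1$ nodes and rows $2,\dots,\ell$ each have at least $\la_\ell\geq 1$ nodes, so
\[
n\geq \la_1+(\ell-1)\la_\ell .
\]
Hence
\[
\chi(\la)=\la_1-\la_\ell+\ell\leq n-(\ell-1)\la_\ell-\la_\ell+\ell=n-\ell(\la_\ell-1)\leq n\leq p .
\]
Equivalently, $\chi(\la)$ is the hook length $h_\la(1,\la_\ell)$, and a hook length never exceeds $n$. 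Thus $\la\in\mathcal{CP}_p(n)$.

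For $n>p$, I need to exhibit some $\la\in\mathcal{P}_p(n)$ with $\chi(\la)>p$. The natural choice is the one-row partition $\la=(n)$. It is $p$-regular, since it has a single part, and $\chi((n))=n-n+1=1$, so it does not work. I will instead take a hook: $\la=(n-p+1,1^{p-1})$ when $n-p+1\geq 2$, i.e. $n\geq p+1$. This has $\ell=p$ and $\la_\ell=1$, so
\[
\chi(\la)=(n-p+1)-1+p=n>p .
\]
It remains to check $p$-regularity. The part $1$ occurs $p-1<p$ times, and the part $n-p+1\geq 2$ occurs once, so $\la\in\mathcal{P}_p(n)$. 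More simply, any $\la=(n-k,1^{k})$ with $1\leq k\leq p-1$ and $n-k\geq 2$ has $\chi(\la)=n>p$, and such a $k$ exists whenever $n>p$ (for instance $k=1$ needs only $n\geq 3$; the case $p=2$, $n=3$ gives $(2,1)$ with $\chi=3>2$). So the simplest uniform choice is $\la=(n-1,1)$, which is $2$-regular since $n-1\geq 2>1$, and satisfies $\chi=n>p$.

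No real obstacle is expected. The only care needed is that the chosen witness is genuinely $p$-regular for every $p$, including $p=2$, which $(n-1,1)$ handles because its two parts are distinct.
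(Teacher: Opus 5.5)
Your proof is correct and follows the same route as the paper: you bound $\chi(\la)\leq n$ using that every row has positive length (you use the marginally sharper $\la_i\geq\la_\ell$), and for $n>p$ you use the witness $(n-1,1)$ with $\chi=n>p$, which is exactly the paper's choice. The detours through $(n)$ and the hook $(n-p+1,1^{p-1})$ are unnecessary but harmless, and you correctly derive the second sentence from Theorem~\ref{thm:K2} and James's classification.
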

\begin{proof}
Suppose $n\leq p$. For any $p$-regular partition $\la=(\la_1,\la_2,\cdots,\la_\ell)$ of $n$ with $\ell(\la)=\ell$, we have $\chi(\la)=\la_1-\la_\ell+\ell=n-(\la_2+\la_3+\cdots+\la_\ell)-\la_\ell+\ell\leq n\leq p$. Hence $\mathcal{P}_p(n)\subseteq\mathcal{CP}_p(n)$ which leads to $\mathcal{CP}_p(n)=\mathcal{P}_p(n)$.  Conversely, 
if $n> p$, then obviously $\chi((n-1,1))=n>p$ while $(n-1,1)\in\mathcal{P}_n(n)$. This means $\mathcal{CP}_p(n)\subsetneq\mathcal{P}_p(n)$ and thus $\mathcal{CP}_p(n)\neq\mathcal{P}_p(n)$ if $n>p$.    
This proves the lemma. 
\end{proof}

\begin{prop}\label{prop:p-standard}
Suppose $n\leq p$ and $\la\in\mathcal{CP}_p(n)$, then we have
\[\sharp \op{Std}_p(\la)=\begin{cases}
f^{(k-1,1^{n-k})}=\binom{n-2}{k-2}, &\text{ if }\la=(k,1^{n-k})\text{ for some }2\leq k\leq n-1,\\
f^\la=\frac{n!}{\prod_{(i,j)\in\la}h_\la(i,j)}, &\text{ otherwise}. 
\end{cases}\]
\end{prop}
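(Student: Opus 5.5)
The plan is to analyze exactly when the $p$-standard condition in \eqref{eq:Stdp} can bite, i.e.\ which pairs of nodes $(i,j),(i',j')\in\la$ with $i>i'$, $j<j'$ satisfy $i+j'+1-i'-j=p$. The quantity $i-i'+j'-j+1$ is at most the hook length $h_\la(i',j)$, since $(i,j)$ lies in column $j$ below $(i',j)$ and $(i',j')$ lies in row $i'$ to the right of it. Moreover $h_\la(i',j)\le h_\la(1,1)=\la_1+\ell(\la)-1$, and since $|\la|=n$ we have $\la_1+\ell-1\le n\le p$. Equality $h_\la(1,1)=n$ holds precisely when $\la$ is a hook $(k,1^{n-k})$. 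So I will split into cases according to whether a constrained pair can exist at all.

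\textbf{Case 1: $\la$ not a hook, or $n<p$.} If $n<p$, then every hook length is $\le n<p$, so no pair satisfies the equation. If $n=p$ and $\la$ is not a hook, then $\la_1+\ell-1<n=p$, again no pair. In both situations \eqref{eq:Stdp} imposes no condition, $\op{Std}_p(\la)=\op{Std}(\la)$, and \eqref{eq:hook} gives $f^\la$. One must also handle the hooks when $n<p$: these fall under ``otherwise'' only if the statement is read with $n=p$. Since for $n<p$ a hook gives $f^\la=\binom{n-1}{k-1}\neq\binom{n-2}{k-2}$, I will note that the first case of the formula is implicitly for $n=p$ (with $k=1,n$ trivially giving $f^\la=1$ as well). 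I'll state this interpretation explicitly.

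\textbf{Case 2: $n=p$, $\la=(k,1^{p-k})$, $2\le k\le p-1$.} Here $i+j'+1-i'-j=p$ with $i>i'$, $j<j'$ forces $(i,j)$ in the first column and $(i',j')$ in the first row with $i-i'+j'-j+1=p$. The maximum of the left side is attained only at $(i,j)=(p-k+1,1)$, $i'=1$, $(i',j')=(1,k)$, so there is exactly one constraint: the entry at the end of the first row exceeds the entry at the bottom of the first column. A standard tableau of hook shape is determined by which of $2,\dots,p$ lie in the arm. Among these, the largest entry $p$ sits either at the end of the arm or at the bottom of the leg, and the constraint says it must be at the end of the arm. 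Removing it yields a bijection with standard tableaux of shape $(k-1,1^{p-k})$, giving $f^{(k-1,1^{n-k})}=\binom{n-2}{k-2}$.

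The main obstacle is the bookkeeping in the inequality $i-i'+j'-j+1\le h_\la(i',j)\le \la_1+\ell-1$ and the characterization of equality. Together with the interpretive issue about $n<p$ hooks, this is the step I would check most carefully; the rest is routine.
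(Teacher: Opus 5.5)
Your argument is correct and is essentially the paper's. You bound $i+j'+1-i'-j$ by the hook length $h_\lambda(i',j)$, note that this is $<p$ unless $n=p$ and $\lambda$ is a hook, and for $\lambda=(k,1^{p-k})$ reduce the single surviving constraint to $T_{(1,k)}=p$, which gives a bijection with $\op{Std}(\lambda^-)$.

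Your caveat is also justified. For $n<p$, a hook with $2\le k\le n-1$ has $\sharp\op{Std}_p(\lambda)=f^\lambda=\binom{n-1}{k-1}\neq\binom{n-2}{k-2}$; for example, $\lambda=(2,1)$ with $n=3$, $p=5$ gives $2\neq 1$. So the first case of the formula must be read with $n=p$. The paper's proof glosses over this when it says the $n<p$ case follows directly from the hook length formula.
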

\begin{proof}
Clearly in the case $n<p$ we have $\mathcal{CP}_p(n)=\mathcal{P}_p(n)=\mathcal{P}(n)$ and moreover $\op{Std}_p(\la)=\op{Std}(\la)$ by \eqref{eq:Stdp}. Thus the proposition holds by  \eqref{eq:hook}. 
Now assume $n=p$.  It is direct to verify 
$$
\mathcal{P}_p(n)=\mathcal{P}(n)\setminus \{(1^n)\}.
$$ 
and hence $\mathcal{CP}_p(n)=\{\la\in\mathcal{P}(n)\mid \la\neq (1^n)\}$ by Lemma \ref{lem:equal}. 
Assume $\la\in\mathcal{P}(n)$ with $\la\neq (1^n)$. Notice that for any two nodes $(i,j)$ and $(i^\prime,j^\prime)$ in the Young diagram $\la$ with $i>i^\prime,j<j^\prime$, we have 
\begin{equation}\label{eq:p-standard}
i+j^\prime+1-i^\prime-j\leq \la^\prime_j+\la_{i^\prime}+1-i^\prime-j=h_{\la}(i^\prime,j)
\end{equation}
and moreover the equality in \eqref{eq:p-standard} holds if and only if $i=\la^\prime_j,j^\prime=\la_{i^\prime}$. Meanwhile if $\la_2\geq 2$ then 
\begin{equation}\label{eq:hook-inequality}
h_\la(i,j)<n=p 
\end{equation}
for any node $(i,j)$ in the Young diagram $\la$. 
Then by \eqref{eq:p-standard} and \eqref{eq:hook-inequality} we obtain that every $T\in\op{Std}(\la)$ is $p$-standard if $\la_2\geq 2$. Thus 
\begin{equation}\label{eq:Stdp-equal-1}
\op{Std}_p(\la)=\op{Std}(\la)\text{ if } \la\neq (k,\underbrace{1,1,\ldots,1}_{n-k})=(k,1^{n-k})
\end{equation}
Now suppose $\la_2\leq 1$. Then $\la$ must be of the form $ \la=(k,1^{n-k})$ with $k\geq 2$. Suppose $(i,j)$ and $(i',j')$ are two nodes in the Young diagram $\la$ satisfying $i>i^\prime,j<j^\prime$ and $i+j^\prime+1-i^\prime-j=p$. Then clearly we have  $(i',j')=(1,k)$ and  $(i,j)=(n-k+1,1)$. Hence a tableaux $T\in \op{Std}(\la)$ belongs to $\op{Std}_p(\la)$ if and only if $T_{(1,k)}>T_{(n-k+1,1)}$ which means $T_{(1,k)}=n$. This implies that 
$
\op{Std}_p(\la)=\{T\in \op{Std}(\la)\mid T_{(1,k)}=n\}
$
and thus 
\begin{equation}\label{eq:Stdp-equal-2}
\sharp\op{Std}_p(\la)=\sharp\op{Std}(\la^-), \quad \text{ where }\la^-=(k-1,1^{n-k}). 
\end{equation}
Then the proposition follows from \eqref{eq:Stdp-equal-1}, \eqref{eq:Stdp-equal-2} and \eqref{eq:hook}. 
\end{proof}
Then we are ready to introduce one of our main results.
\begin{thm}
Suppose $n=p$ and $\la\in\mathcal{CP}_p(n)$. 
\\
(1) If $ \la\neq (k,1^{n-k})$, then there exists an isomorphism of $\mFS$-modules 
\begin{equation*}
{D}^\la\cong \oplus_{T\in{\rm Std}(\la)}\mathbb{F}v_T \text{ as a vector space over }\mathbb{F}, 
\end{equation*}
where the action $s_i\in\mathfrak{S}_n$ for $1\leq i\leq n-1$ satisfies 
\begin{equation}\label{eq:sk-action-2}
s_i v_T=\left\{
\begin{array}{ll}
\rho_i(T)v_T+\sqrt{1-\left(\rho_i(T)\right)^2}v_{s_i T},&\text{ if }s_iT \in{\rm Std}(\la),\\
\rho_i(T)v_T,&\text{ otherwise.}
\end{array}
\right.
\end{equation}
In addition, $\dim D^\la=\frac{n!}{\prod_{(i,j)\in\la}h_\la(i,j)}.$
\\
(2) If $\la=(k,1^{n-k})$ with $k\geq 2$, there exists an isomorphism $\mFS$-modules 
 \begin{equation*}
D^\la\cong\oplus_{T\in{\rm Std}(\la^-)}\mathbb{F}v_T \text{ as a vector space over }\mathbb{F}, 
\end{equation*}
with $\la^-=(k-1,1^{n-k})$ and the action $s_i\in\mathfrak{S}_n$ for $1\leq i\leq n-1$ satisfies 
\begin{equation}\label{eq:sk-action-3}
\begin{aligned}
s_i v_T=&\left\{
\begin{array}{ll}
\rho_i(T)v_T+\sqrt{1-\left(\rho_i(T)\right)^2}v_{s_i T}&\text{ if } 1\leq i\leq n-2\text{ and }s_iT \in{\rm Std}(\la^-) ,\\
\rho_i(T)v_T,&\text{ if }1\leq i\leq n-2\text{ and }s_iT \notin{\rm Std}(\la^-) 
\end{array}
\right.\\
s_{n-1}v_T=&\left\{
\begin{array}{cc}
v_T,&\text{ if } n-1=T_{(1,k-1)},\\
-v_T,&\text{ if }n-1=T_{(n-k+1,1)}. 
\end{array}
\right.\
\end{aligned}
\end{equation}
In addition, $\dim D^\la=\binom{n-2}{k-2}.$
\\
(3) The set $\{D^\la\mid \la\in\mathcal{P}(n), \la\neq (1^n)\}$ is a complete set of non-isomorphic irreducible $\mFS$-modules. 
\end{thm}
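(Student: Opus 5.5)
Everything should follow by combining Lemma \ref{lem:equal}, Proposition \ref{prop:p-standard} and Ruff's seminormal form \eqref{eq:sk-action}. For $n=p$, Lemma \ref{lem:equal} and the proof of Proposition \ref{prop:p-standard} give $\mathcal{CP}_p(n)=\mathcal{P}_p(n)=\mathcal{P}(n)\setminus\{(1^n)\}$. Hence every $D^\la$ with $\la\neq(1^n)$ is completely splittable, and \cite[Theorem 4.9]{Ru} realizes it on $\oplus_{T\in{\rm Std}_p(\la)}\mathbb{F}v_T$ with the action \eqref{eq:sk-action}. Part (3) is then the classification theorem of \cite{Ja} rewritten with this description of $\mathcal{P}_p(n)$. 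The dimensions in (1) and (2) are exactly Proposition \ref{prop:p-standard} together with Theorem \ref{thm:K2}.

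\textbf{Part (1).} Here $\la$ is not a hook, so $\la_2\geq 2$. By \eqref{eq:Stdp-equal-1}, ${\rm Std}_p(\la)={\rm Std}(\la)$, and the condition ``$s_iT\in{\rm Std}_p(\la)$'' in \eqref{eq:sk-action} becomes ``$s_iT\in{\rm Std}(\la)$''. Thus \eqref{eq:sk-action} is literally \eqref{eq:sk-action-2}.

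\textbf{Part (2).} Here $\la=(k,1^{n-k})$, and the proof of Proposition \ref{prop:p-standard} shows ${\rm Std}_p(\la)=\{T\in{\rm Std}(\la): T_{(1,k)}=n\}$. Deleting the node $(1,k)$ gives a bijection $T\mapsto T^-$ onto ${\rm Std}(\la^-)$, and I would relabel $v_T$ as $v_{T^-}$. The residues of the nodes containing $1,\dots,n-1$ do not change, so $\rho_i(T)=\rho_i(T^-)$ for $i\leq n-2$. Also $s_iT$ is $p$-standard iff $s_iT^-\in{\rm Std}(\la^-)$, because $s_i$ with $i\leq n-2$ does not move $n$. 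So the first two lines of \eqref{eq:sk-action-3} follow from \eqref{eq:sk-action}.

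For $s_{n-1}$, note that $s_{n-1}T$ would put $n$ outside the node $(1,k)$, so it is never $p$-standard. Hence $s_{n-1}v_T=\rho_{n-1}(T)v_T$. Since $T^-$ is standard of hook shape $\la^-$, the entry $n-1$ lies in an outer corner of $\la^-$, namely $(1,k-1)$ or $(n-k+1,1)$.
\begin{itemize}
\item If $n-1$ is at $(1,k-1)$, then ${\rm res}(1,k)-{\rm res}(1,k-1)=1$, so $\rho_{n-1}=1$.
\item If $n-1$ is at $(n-k+1,1)$, the difference is $(k-1)-(k-n)=n-1=p-1\equiv-1$, so $\rho_{n-1}=-1$.
\end{itemize}
These are the two cases of the formula for $s_{n-1}$ in \eqref{eq:sk-action-3}.

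The only step needing care is this last residue computation modulo $p$ and the matching of the branching conditions under the bijection $T\mapsto T^-$. The edge case $k=n$ (the trivial partition) is consistent: $\la^-=(n-1)$ and $s_{n-1}$ acts by $1$.
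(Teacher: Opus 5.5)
Your proposal is correct and follows essentially the same route as the paper. Both arguments use Ruff's seminormal form together with Theorem~\ref{thm:K2}, Lemma~\ref{lem:equal} and Proposition~\ref{prop:p-standard}, and handle the hook case through the bijection $T\mapsto T\setminus(1,k)$ onto ${\rm Std}(\la^-)$, noting that $s_{n-1}T$ is never $p$-standard. Your explicit residue computation giving $\rho_{n-1}(T)=1$ or $-1$ (via $n-1=p-1\equiv -1$) fills in the verification of the $s_{n-1}$ formula, which the paper leaves implicit.
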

\begin{proof}
The first part (1) clearly is due to \eqref{eq:sk-action}, Theorem \ref{thm:K2} and Proposition \ref{prop:p-standard}. It remains to prove the second part (2). Now assume $\la=(k,1^{n-k})$ with $k\geq 2$. 
Set $\la^-=(k-1,1^{n-k})$. Then by the proof of Proposition \ref{prop:p-standard}, we obtain that there exists a bijection 
\begin{equation}
\phi: {\rm Std}_p(\la)\longrightarrow {\rm Std}(\la^-),\qquad  T\mapsto \phi(T)=T\setminus (1,k)
\end{equation}
Clearly $\phi(s_iT)=s_i\phi(T)$ for $1\leq i\leq n-2$. Moreover $s_{n-1}T\notin {\rm Std}_p(\la)$. Then one can identify $v_T$ with $v_{\phi(T)}$. Again (2) follows from \eqref{eq:sk-action}, Theorem \ref{thm:K2} and Proposition \ref{prop:p-standard}. 
This proves the theorem. 
\end{proof}

\begin{cor}
Suppose $n=p$ and $\la\neq (1^n)$. 

(1) If $\la\neq (k,1^{n-k})$, then ${D}^\la\cong S^\la$ and hence ${\rm Rad}^\la=0$. 

(2) If $\la=(k,1^{n-k})$ with $k\geq 2$, then $\dim {\rm Rad}^\la=\binom{n-2}{k-1}$.  
\end{cor}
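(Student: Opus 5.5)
The approach is pure dimension counting. By construction $D^\la=S^\la/{\rm Rad}^\la$, so $\dim{\rm Rad}^\la=\dim S^\la-\dim D^\la=f^\la-\dim D^\la$. The preceding theorem, via Proposition \ref{prop:p-standard} and Theorem \ref{thm:K2}, gives $\dim D^\la$ in both cases. It therefore suffices to compare this with $f^\la=\sharp\op{Std}(\la)$.

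\textbf{Case (1).} Here $\la\neq(k,1^{n-k})$. By part (1) of the theorem, $\dim D^\la=n!/\prod h_\la(i,j)=f^\la=\dim S^\la$ by \eqref{eq:hook}. Since ${\rm Rad}^\la$ is a submodule of $S^\la$ with quotient $D^\la$, equality of dimensions forces ${\rm Rad}^\la=0$. Hence the natural projection $S^\la\to D^\la$ is an isomorphism of $\mFS$-modules.

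\textbf{Case (2).} Here $\la=(k,1^{n-k})$ with $k\ge2$. A standard tableau of hook shape is determined by which $k-1$ of the entries $2,\dots,n$ lie in the first row (the entry $1$ sits at $(1,1)$). Hence $f^\la=\binom{n-1}{k-1}$. The same count applied to $\la^-$, a hook of size $n-1$, recovers $\binom{n-2}{k-2}$, consistent with Proposition \ref{prop:p-standard}. Then
\[
\dim{\rm Rad}^\la=\binom{n-1}{k-1}-\binom{n-2}{k-2}=\binom{n-2}{k-1}
\]
by Pascal's rule. Alternatively, one can read this off the proof of Proposition \ref{prop:p-standard}: the tableaux in $\op{Std}(\la)\setminus\op{Std}_p(\la)$ are exactly those with $n$ at the bottom of the first column. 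Removing that node gives a bijection onto $\op{Std}((k,1^{n-k-1}))$, whose cardinality is $\binom{n-2}{k-1}$.

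There is no real obstacle. The only point needing care is the edge case $k=n$, i.e.\ $\la=(n)$. Here $\binom{n-2}{n-1}=0$, which agrees with $S^{(n)}$ being the one-dimensional trivial module, so the radical is zero. This case is formally covered by the same Pascal identity and needs no separate argument.
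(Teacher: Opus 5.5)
Your dimension count is correct and is essentially the argument the paper intends: it states the corollary without a separate proof, as an immediate consequence of $\dim S^\la=f^\la$ and the dimension formulas in parts (1) and (2) of the preceding theorem. In the hook case this gives $\binom{n-1}{k-1}-\binom{n-2}{k-2}=\binom{n-2}{k-1}$, and your treatment of $k=n$, where $\op{Std}_p(\la)=\op{Std}(\la)$ and the radical is zero, is consistent with this.
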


\section{Irreducible representations of Sergeev superalgebra $\mathcal{Y}_p$}\label{sec:Sergeev}
In this section we assume that $\F$ is an algebraically closed field of characteristic
$p$ with $p\neq 2$ or equivalently $p\geq 3$. We will derive a closed dimension formula and an explicit construction for irreducible representations of the Sergeev superalgebra $\mathcal{Y}_p$ over  $\mathbb{F}$. 

\subsection{Basics on affine Sergeev superalgebras}
{\color{black}
We first recall some basic on superalgebras, referring the
reader to~\cite[Chapter 12]{K1}. A vector superspace $V$ means a $\mathbb{Z}_2$-graded space $V=V_{\bar{0}}\oplus V_{\bar{1}}$ over $\mathbb{F}$. Denote by 
$\bar{v}\in\mathbb{Z}_2$ the parity of a homogeneous vector $v$ of a
vector superspace. By a superalgebra, we mean a
$\mathbb{Z}_2$-graded associative algebra. Let $\mathcal{A}$ be a
superalgebra. An $\mathcal{A}$-module means a $\mathbb{Z}_2$-graded
left $\mathcal{A}$-module and a homomorphism $f:V\rightarrow W$ of
$\mathcal{A}$-modules $V$ and $W$ means a linear map such that $
f(av)=(-1)^{\bar{f}\bar{a}}af(v).$  Note that this and other such
expressions only make sense for homogeneous $a, f$ and the meaning
for arbitrary elements is to be obtained by extending linearly from
the homogeneous case.  Let $V$ be a finite dimensional
$\mathcal{A}$-module. Let $\Pi
 V$ be the same underlying vector space but with the opposite
 $\mathbb{Z}_2$-grading. The new action of $a\in\mathcal{A}$ on $v\in\Pi
 V$ is defined in terms of the old action by $a\cdot
 v:=(-1)^{\bar{a}}av$. Note that the identity map on $V$ defines
 an isomorphism from $V$ to $\Pi V$.

By a superalgebra analog of Schur's Lemma,  the endomorphism
algebra of a finite dimensional irreducible module over a
superalgebra is either one dimensional or two dimensional. In the
former case, we call the module of {\em type }\texttt{M} while in
the latter case the module is called of {\em type }\texttt{Q}.

Given two superalgebras $\mathcal{A}$ and $\mathcal{B}$, 
the tensor product of superspaces $\mathcal{A}\otimes\mathcal{B}$
can be viewed as a superalgebra with multiplication defined by
$$
(a\otimes b)(a'\otimes b')=(-1)^{\bar{b}\bar{a'}}(aa')\otimes (bb')
\qquad (a,a'\in\mathcal{A}, b,b'\in\mathcal{B}).
$$
Suppose $V$ is an $\mathcal{A}$-module and $W$ is a
$\mathcal{B}$-module. Then $V\otimes W$ affords an $A\otimes B$-module
denoted by $V\boxtimes W$ via
$$
(a\otimes b)(v\otimes w)=(-1)^{\bar{b}\bar{v}}av\otimes bw,~a\in A,
b\in B, v\in V, w\in W.
$$

 If $V$ is an irreducible $\mathcal{A}$-module and $W$ is an
irreducible $\mathcal{B}$-module, $V\boxtimes W$ may not be
irreducible. Indeed, we have the following standard lemma (cf.
\cite[Lemma 12.2.13]{K1}).
\begin{lem}\label{tensorsmod}
Let $V$ be an irreducible $\mathcal{A}$-module and $W$ be an
irreducible $\mathcal{B}$-module.
\begin{enumerate}
\item If both $V$ and $W$ are of type $\texttt{M}$, then
$V\boxtimes W$ is an irreducible
$\mathcal{A}\otimes\mathcal{B}$-module of type $\texttt{M}$.

\item If one of $V$ or $W$ is of type $\texttt{M}$ and the other
is of type $\texttt{Q}$, then $V\boxtimes W$ is an irreducible
$\mathcal{A}\otimes\mathcal{B}$-module of type $\texttt{Q}$.

\item If both $V$ and $W$ are of type $\texttt{Q}$, then
$V\boxtimes W\cong X\oplus \Pi X$ for a type $\texttt{M}$
irreducible $\mathcal{A}\otimes\mathcal{B}$-module $X$.
\end{enumerate}
Moreover, all irreducible $\mathcal{A}\otimes\mathcal{B}$-modules
arise as constituents of $V\boxtimes W$ for some choice of
irreducibles $V,W$.
\end{lem}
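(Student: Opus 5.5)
This is the standard tensor-product lemma for superalgebras, and I would prove it through the super version of the density theorem and Wedderburn theory. Over the algebraically closed field $\F$, a finite-dimensional irreducible module $V$ of type \texttt{M} has the image of $\mathcal{A}$ in $\op{End}_\F(V)$ equal to all of $\op{End}_\F(V)$. If $V$ is of type \texttt{Q}, there is an odd involution $J_V$ with $J_V^2=1$ (after rescaling, possible since $\F$ is algebraically closed and $p\neq 2$) spanning the odd part of $\op{End}_\mathcal{A}(V)$. In that case the image of $\mathcal{A}$ is the supercommutant of $J_V$, which is isomorphic to the queer superalgebra $Q(m)$ with $\dim V=2m$. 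I take these facts from the super Schur lemma and the super Jacobson density theorem. The problem then reduces to computing, in each of the three cases, the supercommutant of the image of $\mathcal{A}\otimes\mathcal{B}$ in $\op{End}(V\otimes W)$. That image is the graded tensor product of the two images.

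In case (1) the image is $\op{End}(V)\otimes\op{End}(W)=\op{End}(V\otimes W)$. So $V\boxtimes W$ is irreducible with one-dimensional endomorphism algebra, hence of type \texttt{M}.

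In case (2), say $V$ is of type \texttt{M} and $W$ of type \texttt{Q}. The image is $\op{End}(V)\otimes Q(W)$. Its supercommutant is spanned by $1$ and the odd element $\tilde J=1\otimes J_W$, with the appropriate sign twist $v\otimes w\mapsto(-1)^{\bar v}v\otimes J_Ww$ so that it supercommutes. One has to check that any subspace stable under $\op{End}(V)\otimes Q(W)$ must be all of $V\otimes W$. This follows because $Q(W)$ acts irreducibly on $W$ as a superalgebra and $\op{End}(V)$ is full. So the module is irreducible of type \texttt{Q}.

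In case (3), consider $J=J_V\otimes J_W$, again with the sign twist. This is an even endomorphism commuting with the action, and $J^2=\pm1$. After multiplying by $\sqrt{-1}$ if needed, $J^2=1$, so $V\otimes W$ splits into the $\pm1$-eigenspaces $X$ and $X'$, both of which are graded submodules. The odd operator $J_V\otimes 1$ supercommutes with the action and swaps the two eigenspaces, which gives $X'\cong\Pi X$. Irreducibility and type \texttt{M} of $X$ follow from a dimension count. The image is $Q(m)\otimes Q(m')\cong M(mm'|mm')\otimes\F[J]$, which is isomorphic to $\op{End}(X)\times\op{End}(X)$ acting on $X\oplus X'$. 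The main obstacle is exactly this identification, namely checking that $Q(m)\otimes Q(m')$ is a full matrix superalgebra of dimension $2(2mm')^2$, together with the bookkeeping of the sign conventions. I would do it by an explicit matrix-unit computation using the isomorphism $Q(1)\otimes Q(1)\cong M(1|1)$.

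For the final assertion, let $U$ be an irreducible $\mathcal{A}\otimes\mathcal{B}$-module. Restrict it to $\mathcal{A}\otimes 1$ and choose an irreducible $\mathcal{A}$-submodule $V$. Then $\op{Hom}_\mathcal{A}(V,U)$ is naturally a $\mathcal{B}$-module, and I choose an irreducible $\mathcal{B}$-submodule $W$ of it. The evaluation map $V\boxtimes W\to U$ is a nonzero $\mathcal{A}\otimes\mathcal{B}$-homomorphism and hence surjective. Therefore $U$ is a quotient, and so a constituent, of $V\boxtimes W$.
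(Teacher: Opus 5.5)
The paper gives no proof of this lemma; it only quotes it from Kleshchev's book ([K1, Lemma 12.2.13]). Your overall strategy is the standard one: use super density to identify the image of $\mathcal{A}$ in $\operatorname{End}(V)$ as $\operatorname{End}(V)$ or $Q(m)$, then analyse graded tensor products of the images and their supercommutants. Several parts are fine as written:
cases (1) and (2); in case (3), the construction of $J$, of $X,X'$, and of the odd intertwiner $J_V\otimes 1$ that swaps them; and the final assertion. In the final assertion the evaluation map needs the Koszul sign $v\otimes f\mapsto(-1)^{\bar v\bar f}f(v)$, but that is only bookkeeping.

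The step that fails is the one you yourself flag as the main obstacle. You claim the image $Q(m)\otimes Q(m')$ is isomorphic to $M(mm'|mm')\otimes\mathbb{F}[J]\cong\operatorname{End}(X)\times\operatorname{End}(X)$, of dimension $2(2mm')^2$. This is false. Since $\dim Q(m)=2m^2$, the image has dimension $4m^2m'^2=(2mm')^2$. The isomorphism $Q(1)\otimes Q(1)\cong M(1|1)$ that you intend to use gives $Q(m)\otimes Q(m')\cong M(m|0)\otimes M(m'|0)\otimes M(1|1)\cong M(mm'|mm')$, with no extra factor. Conceptually, $J$ lies in the supercommutant of the image, not in the image. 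In fact the image cannot contain the projection $\frac12(1+J)$ onto $X$: every element of the image supercommutes with $J_V\otimes 1$, whereas $(J_V\otimes 1)\frac12(1+J)=\frac12(1-J)(J_V\otimes 1)$. So the image acts diagonally on $X\oplus X'$, not as $\operatorname{End}(X)\times\operatorname{End}(X')$, and the dimension count as you set it up does not work.

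The fix is short. First, restriction to $X$ is an injective algebra map from the image into $\operatorname{End}(X)$:
\begin{itemize}
\item If $a$ in the image kills $X$, then each homogeneous component of $a$ kills $X$, because $X$ is graded.
\item Each component then also kills $X'=(J_V\otimes 1)X$, by supercommutation with $J_V\otimes 1$.
\item Hence each component is $0$, by faithfulness on $V\otimes W$.
\end{itemize}
Second, $\dim X=2mm'$, so $\dim\operatorname{End}(X)=(2mm')^2$ equals the dimension of the image. The image therefore maps onto $\operatorname{End}(X)$. Hence $X$ is irreducible with supercommutant $\mathbb{F}$, i.e.\ of type $\texttt{M}$, and together with $X'\cong\Pi X$ this proves (3).
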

If $V$ is an irreducible $\mathcal{A}$-module and $W$ is an
irreducible $\mathcal{B}$-module, denote by $V\circledast W$ an
irreducible component of $V\boxtimes W$. Thus,
$$
V\boxtimes W=\left\{
\begin{array}{ll}
V\circledast W\oplus \Pi (V\circledast W), & \text{ if both } V \text{ and } W
 \text{ are of type }\texttt{Q}, \\
V\circledast W, &\text{ otherwise }.
\end{array}
\right.
$$

\begin{defn}For $n\geq 1$, the affine Sergeev  superalgebra $\mhcn$  is
the superalgebra generated by even generators
$s_1,\ldots,s_{n-1},x_1,\ldots,x_n$ and odd generators
$c_1,\ldots,c_n$ subject to the following relations
\begin{align}
s_i^2=1,\quad s_is_j =s_js_i, \quad
s_is_{i+1}s_i&=s_{i+1}s_is_{i+1}, \quad|i-j|>1,\notag\\
x_ix_j&=x_jx_i, \quad 1\leq i,j\leq n, \label{poly}\\
c_i^2=1,c_ic_j&=-c_jc_i, \quad 1\leq i\neq j\leq n, \label{clifford}\\
s_ix_i&=x_{i+1}s_i-(1+c_ic_{i+1}),\notag\\
s_ix_j&=x_js_i, \quad j\neq i, i+1, \notag\\
s_ic_i=c_{i+1}s_i, s_ic_{i+1}&=c_is_i,s_ic_j=c_js_i,\quad j\neq i, i+1, \notag\\
x_ic_i=-c_ix_i, x_ic_j&=c_jx_i,\quad 1\leq i\neq j\leq n. \label{xc}
\end{align}
\end{defn}
\begin{rem} The affine Sergeev superalgebra $\mhcn$ was introduced by
Nazarov~\cite{Na}(called affine Sergeev algebra) to study the
representations of $\mathbb{C}\mathfrak{S}_n^-$.  The quantized version of the
$\mhcn$ introduced later by Jones-Nazarov~\cite{JN} to study the
$q$-analogues of Young symmetrizers for projective representations
of the symmetric group $\mathfrak{S}_n$ is often also called affine
Hecke-Clifford algebras.
\end{rem}

For $\alpha=(\alpha_1,\ldots,\alpha_n)\in\mathbb{Z}_+^n$ and
$\beta=(\beta_1,\ldots,\beta_n)\in\mathbb{Z}_2^n$, set
$x^{\alpha}=x_1^{\alpha_1}\cdots x_n^{\alpha_n}$ and
$c^{\beta}=c_1^{\beta_1}\cdots c_n^{\beta_n}$. Then we have the
following.
\begin{lem}\cite[Theorem 2.2]{BK}\label{lem:PBW}
The set $\{x^{\alpha}c^{\beta}w~|~ \alpha\in\mathbb{Z}_+^n,
\beta\in\mathbb{Z}_2^n, w\in \mathfrak{S}_n\}$ forms a basis of $\mhcn$.
\end{lem}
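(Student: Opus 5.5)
Lemma \ref{lem:PBW} (the PBW basis of $\mhcn$) is cited from Brundan--Kleshchev; to prove it I would show separately that the proposed monomials span $\mhcn$ and that they are linearly independent.

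\emph{Spanning.} Let $\mathcal{M}$ be the linear span of $\{x^\alpha c^\beta w\}$. It contains $1$, so it suffices to show that $\mathcal{M}$ is stable under left multiplication by every generator. Left multiplication by $x_i$ keeps $\mathcal{M}$ invariant, since the $x$'s commute by \eqref{poly}. For $c_i$, I use \eqref{xc} to move $c_i$ past $x^\alpha$, which only introduces the sign $(-1)^{\alpha_i}$. Then \eqref{clifford} reorders $c_ic^\beta$ into $\pm c^{\beta'}$.

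The case of $s_i$ is the substantive one. I would prove by induction on $\alpha_i+\alpha_{i+1}$ that
\[
s_i x^\alpha \in x^{s_i\alpha}s_i+\sum_{|\gamma|<|\alpha|}\mathbb{F}\,x^\gamma c^{\delta}w.
\]
This follows from the commutation relation $s_ix_i=x_{i+1}s_i-(1+c_ic_{i+1})$ and its consequence $s_ix_{i+1}=x_is_i+(1-c_ic_{i+1})$, which is obtained by multiplying by $s_i$ on both sides. Any $c$'s produced in the correction terms are moved right past the remaining $x$'s at the cost of signs, and then past $s_i$ by $s_ic_j=c_{s_i(j)}s_i$. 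Finally $s_i c^\beta w = \pm c^{s_i\beta} s_i w$. Hence $\mathcal{M}=\mhcn$, by induction on the total $x$-degree.

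\emph{Linear independence.} This is the main obstacle. I would construct a faithful representation on the polynomial–Clifford space $V=\mathbb{F}[x_1,\dots,x_n]\otimes\mathcal{C}_n$, the analogue of the polynomial representation of the degenerate affine Hecke algebra. Here $x_i$ and $c_i$ act by left multiplication, with the anticommutation sign built in. The generator $s_i$ acts by a Demazure–Lusztig-type formula
\[
s_i\cdot f=s_i(f)-\frac{f-s_i(f)}{x_{i+1}-x_i}\,(\cdots)\ \text{involving } 1+c_ic_{i+1},
\]
chosen so that the defining relation $s_ix_i=x_{i+1}s_i-(1+c_ic_{i+1})$ holds.

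A more economical route is to induce. Let $\mathcal{Y}_n=\mathcal{C}_n\rtimes\mathbb{F}\mathfrak{S}_n$, and let $\mathbb{F}[x]\otimes\mathcal{C}_n$ be the superalgebra given by \eqref{poly}–\eqref{xc}. I would take $V=\mathbb{F}[x]\otimes\mathcal{Y}_n$ as a vector space, let $\mathcal{Y}_n$ act using the rewriting rules from the spanning step, and verify the braid and quadratic relations for the $s_i$ directly. The only nontrivial checks are $s_i^2=1$ and $s_is_{i+1}s_i=s_{i+1}s_is_{i+1}$ on $x^\alpha$; these reduce to identities in rank two and three.

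Given such a module, the image of $x^\alpha c^\beta w$ applied to $1\otimes 1$ has leading term $x^\alpha\otimes c^\beta w$. Since these leading terms are linearly independent in $V$, a triangularity argument with respect to $x$-degree gives linear independence of the monomials.
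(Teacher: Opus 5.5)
The paper gives no proof of this lemma; it is quoted from Brundan--Kleshchev. Your spanning half is correct. Your strategy for independence is also the standard one: make $\mhcn$ act on a space with basis $x^\alpha\otimes c^\beta w$ and evaluate at $1\otimes 1$. Once such a module exists you don't even need triangularity, since $x^\alpha c^\beta w\cdot(1\otimes 1)=x^\alpha\otimes c^\beta w$ exactly. The gap is that the module is never actually constructed, and that construction is the whole content of the lemma.

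Your first formula cannot work in that shape in $\mpcn$. The element $x_{i+1}-x_i$ does not commute with $c_i,c_{i+1}$, and $f-{}^{s_i}\!f$ need not be divisible by it. For example, for $f=c_i$ you need $s_i\cdot c_i=c_{i+1}$ with no correction term at all. In your second route, ``let $s_i$ act by the rewriting rules'' defines an operator only once you show that rewriting $s_ix^\alpha c^\beta$ gives a result independent of the order in which factors are moved past $s_i$. That is, the rules must be compatible with $x_ix_{i+1}=x_{i+1}x_i$ and $x_ic_i=-c_ix_i$. This is exactly the kind of uniqueness of normal forms you are trying to prove, so it cannot be assumed, and it is missing from your list of ``only nontrivial checks''.

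Here is one way to close the gap. Write ${}^{s_i}\!f$ for the image of $f\in\mpcn$ under the automorphism that permutes indices by $s_i$. Let $D_i:\mpcn\to\mpcn$ be the linear map satisfying $D_i(fg)={}^{s_i}\!f\,D_i(g)+D_i(f)\,g$, with $D_i(x_i)=-(1+c_ic_{i+1})$, $D_i(x_{i+1})=1-c_ic_{i+1}$, and $D_i(x_j)=D_i(c_k)=0$ for $j\neq i,i+1$ and all $k$. This map exists because it kills the defining relations of $\mpcn$. For instance, $D_i(x_ix_{i+1})=0=D_i(x_{i+1}x_i)$, using that $c_ic_{i+1}$ anticommutes with $x_i$ and $x_{i+1}$. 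Likewise $D_i(x_ic_i)=c_{i+1}-c_i=-D_i(c_ix_i)$.

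On $V=\mpcn\otimes_{\mathcal{C}_n}\mathcal{Y}_n$, which has basis $x^\alpha\otimes c^\beta w$, set $s_i(f\otimes y)={}^{s_i}\!f\otimes s_iy+D_i(f)\otimes y$. This is well defined over $\mathcal{C}_n$. The operator identity $s_ig=({}^{s_i}g)s_i+D_i(g)$ then holds for every $g\in\mpcn$, which gives all the mixed relations at once.

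For each of $s_i^2=1$, $s_is_j=s_js_i$ with $|i-j|>1$, and the braid relation, let $B$ be the difference of the two sides. Then $B$ kills $1\otimes\mathcal{Y}_n$, because the relation holds in $\mathcal{Y}_n$ and $D_i(1)=0$. A finite computation from the mixed relations (using $s_i^2=1$ in the braid case) gives $Bx_j=x_{\sigma(j)}B$ and $Bc_j=c_{\sigma(j)}B$, where $\sigma$ is the corresponding permutation. Hence $B(f\otimes y)={}^{\sigma}\!f\cdot B(1\otimes y)=0$. This is the precise form of your ``identities in rank two and three''.
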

%

%

For each
$i\in\mathbb{Z}$, set
\begin{align}
\mathtt{q}(i)=i(i+1).\label{qi}
\end{align}
Denote by $\mathbb{Z}_+$ the set of nonnegative
integers and let
\begin{align}
\mathbb{I}=\left\{
\begin{array}{ll}
\mathbb{Z}_+, & \text{ if } p=0, \\
\{0,1,\ldots,\frac{p-1}{2}\}, &\text{ if } p\geq 3.
\end{array}
\right.
\label{defn:I} 
\end{align}
Then it is easy to verify 
\begin{equation}\label{eq:qi=qj}
\text{ if }i,j\in\mathbb{I}, \text{ then }\mathtt{q}(i)=\mathtt{q}(j) \text{ if and only if }i=j.
\end{equation} and moreover $\{\mathtt{q}(i)\mid i\in\mathbb{Z}\}=\{\mathtt{q}(i)\mid i\in\mathbb{I}\}$.  This justifies the introduction of $\I$. 
Denote by $\mpcn$ the superalgebra generated by even generators
$x_1,\ldots,x_n$ and odd generators $c_1,\ldots,c_n$ subject to the
relations~(\ref{poly}),~(\ref{clifford}) and~(\ref{xc}). By
Lemma~\ref{lem:PBW}, $\mpcn$ can be identified with the subalgebra
of $\mhcn$ generated by $x_1,\ldots,x_n$ and $c_1,\ldots,c_n$. For a
composition $\mu=(\mu_1,\mu_2,\ldots,\mu_r)$ of $n$, we define
$\mathfrak{H}_{\mu}^{\mathfrak{c}}$ to be the subalgebra of $\mpcn$
generated by $\mpcn$ and $s_j\in \mathfrak{S}_{\mu}=\mathfrak{S}_{\mu_1}\times\cdots
\times \mathfrak{S}_{\mu_r}$. Note that
$\mpcn=\mathfrak{H}_{(1^n)}^{\mathfrak{c}}$. Let us denote by
$\operatorname{Rep}_{\mathbb{I}}\mathfrak{H}_{\mu}^{\mathfrak{c}}$ the
category of so-called \emph{integral} finite dimensional
$\mathfrak{H}_{\mu}^{\mathfrak{c}}$-modules on which the
$x^2_1,\ldots, x^2_n$ have eigenvalues of the form $\mathtt{q}(i)$ for
$i\in\mathbb{I}$. For each $i\in\mathbb{I}$, denote by $L(i)$ the $2$-dimensional
$\mathcal{P}_1^{\mathfrak{c}}$-module with
$L(i)_{\bar{0}}=\mathbb{F}v_0$ and $L(i)_{\bar{1}}=\mathbb{F}v_1$
and
\begin{equation}\label{eq:P1-irrep}
x_1v_0=\sqrt{\mathtt{q}(i)}v_0,\quad x_1v_1=-\sqrt{\mathtt{q}(i)}v_1, \quad
c_1v_0=v_1,\quad c_1v_1=v_0.
\end{equation}
Note that $L(i)$ is irreducible of type $\texttt{M}$ if $i\neq 0$,
and irreducible of type $\texttt{Q}$ if $i=0$. Moreover $\{L(i)\mid 
i\in\mathbb{I}\}$ forms a complete set of pairwise non-isomorphic irreducible
$\mathcal{P}_1^{\mathfrak{c}}$-module in the category
$\operatorname{Rep}_{\mathbb{I}}\mathcal{P}_1^{\mathfrak{c}}$. Observe that
$\mathcal{P}^{\mathfrak{c}}_n\cong \mathcal{P}_1^{\mathfrak{c}}\otimes\cdots\otimes
\mathcal{P}_1^{\mathfrak{c}}$, and hence we have the following
result by Lemma~\ref{tensorsmod}.

\begin{lem} \cite[Lemma 4.8]{BK}\label{lem:irrepPn}
The set of $\mpcn$-modules
$$
\{L(\underline{i})=L(i_1)\circledast
L(i_2)\circledast\cdots\circledast
L(i_n)|~\underline{i}=(i_1,\ldots,i_n)\in\mathbb{I}^n\}
$$
forms a complete set of pairwise non-isomorphic irreducible
$\mpcn$-module in the category $\operatorname{Rep}_{\mathbb{I}}\mpcn$.
Moreover, denote by $\gamma_0$ the number of $1\leq j\leq n$ with
$i_j=0$. Then $L(\underline{i})$ is of type $\texttt{M}$ if
$\gamma_0$ is even and type $\texttt{Q}$ if $\gamma_0$ is odd.
Furthermore,
$\text{dim}~L(\underline{i})=2^{n-\lfloor\frac{\gamma_0}{2}\rfloor}$,
where $\lfloor\frac{\gamma_0}{2}\rfloor$ denotes the greatest
integer less than or equal to $\frac{\gamma_0}{2}$ .
\end{lem}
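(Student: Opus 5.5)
The plan is to reduce everything to the rank-one case via the isomorphism of superalgebras $\mpcn\cong \mathcal{P}_1^{\mathfrak{c}}\otimes\cdots\otimes\mathcal{P}_1^{\mathfrak{c}}$ ($n$ factors). This holds because the relations (\ref{poly}), (\ref{clifford}) and (\ref{xc}) say exactly that the $i$th copy of $\mathcal{P}_1^{\mathfrak{c}}$, generated by $x_i$ (even) and $c_i$ (odd), supercommutes with the other copies. Lemma \ref{lem:PBW} then guarantees that the natural surjection from the super tensor product is injective, since the monomials $x^\alpha c^\beta$ correspond to a tensor basis.

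The argument proceeds by induction on $n$ using Lemma \ref{tensorsmod}. The base case $n=1$ is given: $\{L(i)\mid i\in\I\}$ is the complete list of integral irreducibles. We have $L(i)$ of type \texttt{M} exactly when $i\ne0$, and $\dim L(i)=2$. For the inductive step write $\mpcn\cong \mathcal{P}^{\mathfrak c}_{n-1}\otimes \mathcal{P}^{\mathfrak c}_1$. The final clause of Lemma \ref{tensorsmod} says every irreducible module is a constituent of some $L(\underline{i}')\boxtimes L(i_n)$ with $\underline{i}'\in\I^{n-1}$ and $i_n\in\I$; here integrality forces each tensor factor to be integral. The three cases of Lemma \ref{tensorsmod} then determine both type and dimension:
\begin{itemize}
\item $M\otimes M$ gives type \texttt{M} with the dimensions multiplying;
\item $M\otimes Q$ gives type \texttt{Q} with the dimensions multiplying;
\item $Q\otimes Q$ gives type \texttt{M} with half the product dimension.
\end{itemize}
Since each factor $L(0)$ changes the parity of the number of type-\texttt{Q} pieces, the type of $L(\underline{i})$ is \texttt{M} iff $\gamma_0$ is even. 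Tracking the halving, which occurs exactly when $\gamma_0$ passes from odd to even, gives $\dim L(\underline i)=2^{n-\lfloor\gamma_0/2\rfloor}$.

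Completeness follows from the last sentence of Lemma \ref{tensorsmod}. Note that in the type $Q\otimes Q$ case, $X$ and $\Pi X$ are isomorphic via the identity map as remarked in the text, so the choice of constituent in $V\circledast W$ is harmless.

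The main point needing care is pairwise non-isomorphism. My approach is to read off $\underline i$ from the module. On $L(\underline i)$ the element $x_j^2$ is even and central in the $j$th factor, and it acts on $L(i_j)$ by the scalar $\mathtt q(i_j)$. Hence on $L(\underline i)$ it acts by the scalar $\mathtt q(i_j)$, and this persists on any constituent. By (\ref{eq:qi=qj}) the scalar determines $i_j\in\I$, so distinct $\underline i$ give non-isomorphic modules. This is also why one should restrict to $\I$ rather than all of $\mathbb Z$.
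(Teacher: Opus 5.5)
Your proposal is correct and takes essentially the same route as the paper, which simply observes $\mpcn\cong\mathcal{P}_1^{\mathfrak{c}}\otimes\cdots\otimes\mathcal{P}_1^{\mathfrak{c}}$ and applies Lemma \ref{tensorsmod}, citing \cite{BK}. Your induction tracking types and dimensions, and your separation of the $L(\underline{i})$ via the central even elements $x_j^2$ acting by $\mathtt{q}(i_j)$ (which determine $i_j$ by \eqref{eq:qi=qj}), spell out the details the paper leaves to that citation.
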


\begin{rem}\label{rem:Ltau}
Note that each permutation $\tau\in \mathfrak{S}_n$ defines a superalgebra
isomorphism $\tau:\mpcn\rightarrow \mpcn$ by mapping $x_k$ to
$x_{\tau(k)}$ and $c_k$ to  $c_{\tau(k)}$, for $1\leq k\leq n$. For
$\underline{i}\in\mathbb{I}^n$, the twist of the action of
$\mathcal{P}_n^{\mathfrak{c}}$ on $L(\underline{i})$ with
$\tau^{-1}$ leads to a new $\mpcn$-module denoted by
$L(\underline{i})^{\tau}$ with
$$
L(\underline{i})^{\tau}=\{z^{\tau}~|~z\in L(\underline{i})\} ,\quad
fz^{\tau}=(\tau^{-1}(f)z)^{\tau}, \text{ for any }f\in
\mathcal{P}_n^{\mathfrak{c}}, z\in L(\underline{i}).
$$
So in particular we have $(x_kz)^{\tau}=x_{\tau(k)}z^{\tau}$ and
$(c_kz)^{\tau}=c_{\tau(k)}z^{\tau}$. It is easy to see that 
\begin{equation}\label{eq:Ltau}
L(\underline{i})^{\tau}\cong L(\tau\cdot \underline{i}), 
\end{equation}
where
$\tau\cdot \underline{i}:=(i_{\tau^{-1}(1)},\ldots,i_{\tau^{-1}(n)})$
for $\underline{i}=(i_1,\ldots,i_n)\in\mathbb{I}^n$ and $\tau\in \mathfrak{S}_n$.
\end{rem}

\begin{defn}\cite[Definition 3.2]{Wa}
A finite dimensional $\mhcn$-modules is said to be completely splittable if the elements $x_1,x_2,\ldots,x_n$ act semisimply. 
\end{defn}

It is known that a classification of irreducible completely splittable $\mhcn$-module has been obtained in \cite[Theorem 4.5, Theorem 5.7, Theorem 5.15 ]{Wa}. We will give a brief review in the following. 

\begin{defn}\label{defn:cs-wt}
Let $\mathfrak{P}(\mhcn)\subset\I^n$ be the subset consisting of $\underline{i}=(i_1,i_2,\ldots,i_n)\in\I^n$ satisfying the following conditions: 
\begin{enumerate}
\item $i_k\neq i_{k+1}$ for all $1\leq k\leq n-1$.

\item The element $\frac{p-1}{2}\in\I$ appears at most once in
$\underline{i}$.

\item If $i_k=i_l=0$ for some $1\leq k<l\leq n$, then
$1\in\{i_{k+1},\ldots, i_{l-1}\}$.

\item If $i_k=i_l\geq 1$ for
some $1\leq k<l\leq n$, then either of the following holds:
\begin{enumerate}
\item $\{i_k-1,i_k+1\}\subseteq\{i_{k+1},\ldots, i_{l-1}\}$,

\item there exists a sequence of integers
$k\leq r_0< r_1<\cdots< r_{\frac{p-3}{2}-i_k}< q<
t_{\frac{p-3}{2}-i_k}<\cdots< t_1<t_0\leq l$ such that
$i_q=\frac{p-1}{2}, i_{r_j}=i_{t_j}=i_k+j$ and $i_k+j$ does not
appear between $i_{r_j}$ and $i_{t_j}$ in $\underline{i}$ for each
$0\leq j\leq \frac{p-3}{2}-i_k$.
\end{enumerate}
\end{enumerate}
\end{defn}

For $\underline{i}\in\I^n$ and $1\leq k\leq n-1$,  the 
simple transposition $s_k$ is said to be admissible with respect
to $\underline{i}$ if $i_k\neq i_{k+1}\pm1$. Define an equivalence
relation $\sim$ on $\I^n$ by declaring that
$\underline{i}\sim\underline{j}$ if there exist $s_{k_1},\ldots,
s_{k_t}$ for some $t\in\mathbb{Z}_+$ such that
$\underline{j}=(s_{k_t}\cdots s_{k_1})\cdot\underline{i}$ and
$s_{k_{l}}$ is admissible with respect to $(s_{k_{l-1}}\cdots
s_{k_1})\cdot\underline{i}$ for $1\leq l\leq t$.  
Observe that if $\underline{i}\in \mathfrak{P}(\mhcn)$ and $s_k$ is
admissible with respect to $\underline{i}$,  then the conditions in
Definition~\ref{defn:cs-wt} hold for $s_k\cdot\underline{i}$ and
hence $s_k\cdot\underline{i}\in\mathfrak{P}(\mhcn)$. This means there is an
equivalence relation denoted by $\sim$ on $\mathfrak{P}(\mhcn)$ inherited from
the equivalence relation $\sim$ on $\I^n$.
For each
$\underline{i}\in \mathfrak{P}(\mhcn)$, set
\begin{align*}
\Lambda_{\underline{i}}&=\{\underline{j}\in\mathfrak{P}(\mhcn)\mid \underline{j}\sim \underline{i}\}, \\
P_{\underline{i}}&=\{\tau=s_{k_t}\cdots s_{k_1}|~s_{k_a}\text{ is
admissible with respect to } s_{k_{a-1}}\cdots
s_{k_1}\cdot\underline{i}, 1\leq a\leq t,
t\in\mathbb{Z}_+\}.
\end{align*}
Then by \cite{Wa} we have the following. 
\begin{lem}\cite[Lemma 4.1]{Wa} \label{lem:phi}
For each $\underline{i}\in \mathfrak{P}(\mhcn)$, the map 
\begin{align}
\phi: P_{\underline{i}}\rightarrow \Lambda_{\underline{i}},\quad \tau\mapsto \tau\cdot \underline{i}=(i_{\tau^{-1}(1)}, i_{\tau^{-1}(2)},\ldots,i_{\tau^{-1}(n)})
\end{align}
is bijective. 
\end{lem}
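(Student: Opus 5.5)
The plan is to check that $\phi$ is well defined and surjective directly from the definitions, and to get injectivity from an order-preservation argument on equal entries.

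\emph{Well-definedness and surjectivity.} Take $\tau=s_{k_t}\cdots s_{k_1}\in P_{\underline{i}}$. Each $s_{k_a}$ is admissible with respect to $s_{k_{a-1}}\cdots s_{k_1}\cdot\underline{i}$. By the observation preceding the definition of $\Lambda_{\underline{i}}$, $\mathfrak{P}(\mhcn)$ is closed under admissible moves, so induction on $t$ gives $\tau\cdot\underline{i}\in\mathfrak{P}(\mhcn)$. Also $\tau\cdot\underline{i}\sim\underline{i}$, so $\phi(\tau)\in\Lambda_{\underline{i}}$. The value $\tau\cdot\underline{i}$ depends only on the permutation $\tau$, not on the admissible word chosen to write it. Conversely, if $\underline{j}\in\Lambda_{\underline{i}}$, the definition of $\sim$ gives a chain $\underline{j}=(s_{k_t}\cdots s_{k_1})\cdot\underline{i}$ of admissible moves. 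Then $\tau=s_{k_t}\cdots s_{k_1}\in P_{\underline{i}}$ and $\phi(\tau)=\underline{j}$, so $\phi$ is surjective.

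\emph{Injectivity.} The idea is to label the letters of $\underline{i}$: position $m$ carries the labelled letter $(m,i_m)$, and $\tau$ moves letter $m$ to position $\tau(m)$. Along an admissible chain, consider two letters whose values $a,b$ satisfy $|a-b|\le 1$. I claim their relative order never changes. To swap them, they would at some moment have to be adjacent and be interchanged by some $s_k$. If $a=b\pm1$, that $s_k$ is not admissible. If $a=b$, they cannot even be adjacent, because every intermediate sequence lies in $\mathfrak{P}(\mhcn)$ and condition (1) forbids equal adjacent entries.

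It then suffices to show that every $\tau\in P_{\underline{i}}$ preserves the relative order of any two equal entries of $\underline{i}$. Granting this, injectivity follows. If $\tau\cdot\underline{i}=\sigma\cdot\underline{i}=\underline{j}$, then for each value $a$ both $\tau$ and $\sigma$ send the occurrences of $a$ in $\underline{i}$ to the occurrences of $a$ in $\underline{j}$ in an order-preserving way. Such a map is unique, so $\tau=\sigma$.

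\emph{Main obstacle.} The hard step is proving that the order of equal entries is preserved, since equal letters might in principle pass each other non-adjacently. The point to establish is: whenever $i_k=i_l=a$ with $k<l$, some index $k<m<l$ has $i_m\in\{a-1,a+1\}$.

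\begin{itemize}
\item If $a=0$, this is condition (3).
\item If $a=\frac{p-1}{2}$, the situation cannot occur, by condition (2).
\item If $a\ge1$ and condition (4)(a) holds, the claim is immediate.
\item If condition (4)(b) holds with $a=\frac{p-3}{2}$, the index $q$ with $k<q<l$ gives $i_q=\frac{p-1}{2}=a+1$.
\item If condition (4)(b) holds with $a<\frac{p-3}{2}$, take $r_1$. It satisfies $k\le r_0<r_1<q<t_1<t_0\le l$ and $i_{r_1}=a+1$.
\end{itemize}

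With this in hand, let $c$ be a letter of value $a\pm1$ lying between the two $a$-letters. By the relative-order invariance, $c$ stays strictly between them throughout any admissible chain. So their order is preserved by every $\tau\in P_{\underline{i}}$, which completes the bijectivity of $\phi$.
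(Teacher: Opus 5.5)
The paper gives no proof of this lemma; it is quoted from \cite[Lemma 4.1]{Wa}, so there is no in-paper argument to compare yours against. Your proof is correct and self-contained.

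Well-definedness and surjectivity follow from the closure of $\mathfrak{P}(\mhcn)$ under admissible moves and from the definition of $\sim$. The relation $\sim$ is symmetric, since $s_k$ is still admissible after it has been applied. Your labelled-letter argument then gives injectivity, because an order-preserving bijection from the positions of $a$ in $\underline{i}$ to the positions of $a$ in $\underline{j}$ is unique.

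Your ``main obstacle'' is not actually an obstacle. Under a sequence of adjacent transpositions, two letters change their relative order only at a step where they occupy positions $k,k+1$ and $s_k$ is applied. They cannot pass each other non-adjacently.

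So the case $a=b$ of your own claim already shows that every $\tau\in P_{\underline{i}}$ preserves the order of equal entries. That case says two equal letters are never adjacent, by condition (1) applied to the intermediate sequences, all of which lie in $\mathfrak{P}(\mhcn)$.

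The later case analysis through conditions (2)--(4), which traps a letter of value $a\pm1$ between the two $a$'s, is therefore correct but redundant. In your argument those conditions matter only through the closure observation, for instance to rule out $i_{k-1}=i_{k+1}$ before an admissible swap at position $k$.
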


For each $\underline{i}\in \mathfrak{P}(\mhcn)$, by Definition \ref{defn:cs-wt} we have $i_k\neq i_{k+1}$ for each $1\leq k\leq n-1$ and then by \eqref{eq:qi=qj} we have $\mathtt{q}(i_k)\neq \mathtt{q}(i_{k+1})$. Thus we can define two linear
operators $\Xi_k$ and $\Omega_k$ on the $\mpcn$-module $L(\underline{i})$ such that
for any $z\in L(\underline{i})$,
\begin{align}
\Xi_kz&:=-\Big(\frac{x_k+x_{k+1}}{x_k^2-x_{k+1}^2}+c_kc_{k+1}\frac{x_k-x_{k+1}}{x_k^2-x_{k+1}^2}\Big)z,\label{Deltak}\\
\Omega_kz&:=\Bigg(\sqrt{1-\frac{2(x_k^2+x_{k+1}^2)}{(x_k^2-x_{k+1}^2)^2}}\Bigg)z
=\Bigg(\sqrt{1-\frac{2(\mathtt{q}(i_k)+\mathtt{q}(i_{k+1}))}{(\mathtt{q}(i_k)-\mathtt{q}(i_{k+1}))^2}}\Bigg)z.\label{omegak}
\end{align}
Suppose $\underline{i}\in \mathfrak{P}(\mhcn)$. Recall the definition
of $L(\underline{i})^{\tau}$ from Remark~\ref{rem:Ltau} for
$\tau\in P_{\underline{i}}$. Denote by $V^{\underline{i}}$ the
$\mpcn$-module defined via
\begin{align} 
V^{\underline{i}}=\oplus_{\tau\in P_{\underline{i}}}
L(\underline{i})^{\tau}.\label{Dunderi}
\end{align}
Then we have the following due to \cite{Wa}.  
\begin{thm}\cite[Theorem 4.5]{Wa}\label{thm:Classficiation}
 Suppose $\underline{i}, \underline{j}\in \mathfrak{P}(\mhcn)$. Then,
 \begin{enumerate}
 \item $V^{\underline{i}}$ affords an irreducible $\mhcn$-module via
\begin{align}
s_kz^{\tau}= \left \{
 \begin{array}{ll}
 \Xi_kz^{\tau}
 +\Omega_kz^{s_k\tau},
 & \text{ if } s_k \text{ is admissible with respect to } \tau\cdot\underline{i}, \\
 \Xi_kz^{\tau}
 , & \text{ otherwise },
 \end{array}
 \right.\label{actionformula}
\end{align}
 for  $1\leq k\leq n-1, z\in L(\underline{i})$ and $\tau\in
P_{\underline{i}}$.  It has the same type as the irreducible
$\mpcn$-supermodule $L(\underline{i})$.

\item
$V^{\underline{i}}\cong V^{\underline{j}}$ if and only if
$\underline{i}\sim\underline{j}$.

\item Every irreducible completely splittable $\mhcn$-module in $\operatorname{Rep}_{\I}\mhcn$
is isomorphic to $V^{\underline{i}}$ for some $\underline{i}\in\mathfrak{P}(\mhcn)$. Hence the equivalence classes $\mathfrak{P}(\mhcn)/\sim$
parametrize irreducible completely splittable $\mhcn$-supermodules in the
category $\operatorname{Rep}_{\I}\mhcn$.

\end{enumerate}
\end{thm}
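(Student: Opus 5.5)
The plan is to build the module through intertwining elements instead of checking the defining relations of $\mhcn$ directly on the formula \eqref{actionformula}. For $1\leq k\leq n-1$ put
$$
\Phi_k=s_k(x_k^2-x_{k+1}^2)+(x_k+x_{k+1})+c_kc_{k+1}(x_k-x_{k+1}).
$$
From the defining relations one should get:
\begin{itemize}
\item $\Phi_kx_k=x_{k+1}\Phi_k$, $\Phi_kc_k=c_{k+1}\Phi_k$, and $\Phi_k$ commutes with $x_j,c_j$ for $j\neq k,k+1$;
\item $\Phi_k^2=(x_k^2-x_{k+1}^2)^2-2(x_k^2+x_{k+1}^2)$;
\item the $\Phi_k$ satisfy the braid relations.
\end{itemize}
Since $x_k^2-x_{k+1}^2$ is invertible on every $L(\tau\cdot\underline{i})$ (by Definition~\ref{defn:cs-wt}(1) and \eqref{eq:qi=qj}), we can solve for $s_k$ in terms of $\Phi_k$. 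Then \eqref{actionformula} says exactly that $\Phi_k$ maps $L(\underline{i})^{\tau}$ to $L(\underline{i})^{s_k\tau}$ by $z^\tau\mapsto(\text{scalar}\cdot\Omega_k z)^{s_k\tau}$, or acts by $0$ when $s_k$ is not admissible.

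To prove part (1) I would carry out the following steps.
\begin{itemize}
\item Check that $s_k^2=1$ and that $s_k$ satisfies the mixed relations with $x_j,c_j$. This reduces to the two identities above together with the twisting rule of Remark~\ref{rem:Ltau}.
\item Check the braid relation $s_ks_{k+1}s_k=s_{k+1}s_ks_{k+1}$. By the braid relations for the $\Phi$'s this becomes a rank-three computation on the six summands indexed by $\langle s_k,s_{k+1}\rangle\tau$. For this step one needs that admissibility is preserved along such chains and that $\phi$ of Lemma~\ref{lem:phi} is a bijection, so that no summand is counted twice.
\item Show that $\Omega_k$ is invertible whenever $s_k$ is admissible. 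Writing $q=\mathtt{q}(i)$, $q'=\mathtt{q}(j)$, one has
$$
(q-q')^2-2(q+q')=\big((i-j)^2-1\big)\big((i+j+1)^2-1\big).
$$
With $i,j\in\I$, this vanishes only when $j=i\pm1$, or $i=j=0$, or $i+j=p-1$ (possible in $\I$ only for $i=j=\frac{p-1}{2}$). These cases are excluded by admissibility together with conditions (1)--(2) of Definition~\ref{defn:cs-wt}.
\end{itemize}

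Irreducibility then uses weights. By Lemma~\ref{lem:phi}, the $(x_1^2,\dots,x_n^2)$-generalized weight spaces of $V^{\underline{i}}$ are exactly the summands $L(\underline{i})^\tau$, and these have pairwise distinct weights. A nonzero submodule therefore contains a whole summand, because each summand is $\mpcn$-irreducible by Lemma~\ref{lem:irrepPn}. The invertible operators $\Phi_k$ along admissible chains then carry it to every summand, so the submodule is all of $V^{\underline{i}}$. An even or odd endomorphism preserves weight spaces, so it is determined by its restriction to $L(\underline{i})$. This gives that the type of $V^{\underline{i}}$ equals the type of $L(\underline{i})$.

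Part (2) follows by comparing weight supports. The set of weights of $V^{\underline{i}}$ is $\Lambda_{\underline{i}}$. If $\underline{i}\sim\underline{j}$, then $L(\underline{j})$ occurs inside $V^{\underline{i}}$, and the universal property of the induced module together with irreducibility gives the isomorphism. Conversely, an isomorphism forces $\underline{j}\in\Lambda_{\underline{i}}$.

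For part (3), let $M$ be an irreducible completely splittable module in $\operatorname{Rep}_\I\mhcn$, and choose a simultaneous $x$-eigenvector, of weight $\underline{i}$ say. I would prove that every weight of $M$ lies in $\mathfrak{P}(\mhcn)$, in two stages.
\begin{itemize}
\item Conditions (1) and (2) come from restricting to copies of $\mathfrak{H}^{\mathfrak c}_2$. If $i_k=i_{k+1}$, or if $i_k,i_{k+1}$ hit a zero of $\Phi_k^2$ in the wrong way, one finds a non-split Jordan block for $x_k$.
\item Conditions (3) and (4) come by induction on $l-k$. Take a counterexample $i_k=i_l$ with $l-k$ minimal. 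Move the entries between $k$ and $l$ using admissible $s_j$; these act invertibly by the $\Omega$ computation above. This reaches a weight with two equal adjacent entries, or a rank-three configuration such as $(i,i\pm1,i)$ that violates semisimplicity. The chain described in (4)(b) is exactly what is needed to pass through the value $\frac{p-1}{2}$.
\end{itemize}
Once $\underline{i}\in\mathfrak{P}(\mhcn)$, Frobenius reciprocity gives a nonzero map $\mathrm{Ind}\,L(\underline{i})\to M$. Comparing weight spaces with $V^{\underline{i}}$ then yields $M\cong V^{\underline{i}}$.

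I expect the main obstacle to be the necessity of condition (4)(b): organizing the combinatorial induction near the boundary value $\frac{p-1}{2}$, where $\mathtt{q}$ folds and $\Phi_k^2$ acquires the extra zeros $i+j+1\equiv\pm1$. I would first try to settle it by a careful rank-three analysis, and then argue by induction on the length of the segment from $k$ to $l$.
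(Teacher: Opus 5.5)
\textbf{The paper gives no proof here.} The statement is quoted from \cite[Theorem 4.5]{Wa}, so your outline has to stand on its own. Part (1) is sound in outline: you rewrite $s_k$ through $\Phi_k$, separate the summands by Lemma~\ref{lem:phi}, and move between summands with invertible $\Phi_k$. However, the last step of part (3), and your argument for the ``if'' half of part (2), have a genuine gap. You also explicitly leave the necessity of (4)(b) open.

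\textbf{The gap in part (3).} Frobenius reciprocity only tells you that $M$ is \emph{some} irreducible quotient of $\operatorname{Ind}L(\underline{i})$. $V^{\underline{i}}$ is also one, and nothing in your sketch forces the two to coincide. ``Comparing weight spaces'' assumes exactly what must be proved: that the weights of $M$ form the single class $\Lambda_{\underline{i}}$, each weight space being one copy of $L(\mu)$. Equivalently, you need $\Phi_k M_\mu=0$ whenever $s_k$ is not admissible for $\mu$, so that $\sum_{\tau\in P_{\underline{i}}}\Phi_\tau L(\underline{i})$ is a submodule.

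This does not follow from complete splittability plus the local rank-two and rank-three analysis. Take $\mathfrak{H}^{\mathfrak{c}}_2$ and $a,a+1\in\I$. The induced module $\operatorname{Ind}_{\mathcal{P}^{\mathfrak{c}}_2}^{\mathfrak{H}^{\mathfrak{c}}_2}L(a,a+1)$ is completely splittable, since its two weights are distinct. Yet $\Phi_1$ maps the $(a,a+1)$-weight space isomorphically onto the $(a+1,a)$-weight space, even though $s_1$ is not admissible. So irreducibility of $M$ must enter globally. One option is an induction on $n$ via restriction to $\mathfrak{H}^{\mathfrak{c}}_{n-1}$. Another is to show directly that no nonzero composite of intertwiners can cross a non-admissible pair and return to $M_{\underline{i}}$. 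Your proposal supplies no such mechanism.

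\textbf{The same objection in part (2).} ``Universal property plus irreducibility'' fails for the same reason, but there the gap is easy to bypass. If $\underline{j}=\sigma\cdot\underline{i}$ with $\sigma\in P_{\underline{i}}$, then $P_{\underline{j}}=P_{\underline{i}}\sigma^{-1}$. The isomorphism $L(\underline{j})\cong L(\underline{i})^{\sigma}$ identifies $L(\underline{j})^{\tau'}$ with $L(\underline{i})^{\tau'\sigma}$. This gives an explicit isomorphism $V^{\underline{j}}\cong V^{\underline{i}}$ compatible with \eqref{actionformula}.

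\textbf{Smaller slips.}
\begin{enumerate}
\item The correct identity is $\Phi_k^2=2(x_k^2+x_{k+1}^2)-(x_k^2-x_{k+1}^2)^2$. With your sign, $s_k=(\Phi_k-(x_k+x_{k+1})-c_kc_{k+1}(x_k-x_{k+1}))(x_k^2-x_{k+1}^2)^{-1}$ would not square to $1$.
\item On $\I$, the zeros of $((i-j)^2-1)((i+j+1)^2-1)$ are $j=i\pm1$, $i=j=0$, and $i+j=p-2$. The last case is again $\{i,j\}=\{\frac{p-3}{2},\frac{p-1}{2}\}$. At $i=j=\frac{p-1}{2}$ the value is $1$, not $0$. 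Your conclusion about $\Omega_k$ still holds.
\item Condition (2) of Definition~\ref{defn:cs-wt} is not a rank-two condition, because the two occurrences of $\frac{p-1}{2}$ need not be adjacent. It must come from the same inductive argument as (3)--(4).
\item Restricting endomorphisms to $L(\underline{i})$ only bounds $\dim\operatorname{End}(V^{\underline{i}})$ from above. For type \texttt{Q} you must also extend the odd endomorphism $J$ of $L(\underline{i})$, for example by $z^\tau\mapsto(Jz)^\tau$, and check that it commutes with \eqref{actionformula}.
\end{enumerate}
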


\subsection{Irreducible completely splittable representations of $\mathcal{Y}_n$}
Denote by $\mathcal{C}_n$ the subalgebra of
$\mhcn$ generated by $c_1,\ldots, c_n$, which is known as the
Clifford algebra. The Sergeev super algebra
$\mathcal{Y}_n=\mathcal{C}_n\rtimes\F \mathfrak{S}_n$ is isomorphic to the
subalgebra of $\mhcn$ generated by $c_1,\ldots, c_n, s_1,\ldots,
s_{n-1}$.  We first recall the classification of irreducible $\mathcal{Y}_n$-modules obtained in \cite{BK} (cf. \cite{K1}). 
A partition $\la$ is called \emph{$p$-strict} if $p$ divides $\la_r$ whenever $\la_r=\la_{r+1}$ for $r \geq 1$.  We say that a $p$-strict partition is \emph{$p$-restricted} if in addition
\[
\begin{cases}
\la_r-\la_{r+1}<p & \text{if }p\mid \la_r,\\
\la_r-\la_{r+1}\leq p & \text{if }p\nmid \la_r,
\end{cases}
\]
Denote by $\mathcal{RP}_p(n)$ be the set of $p$-restricted $p$-stricted partition of $n$. It is known that for each $\la\in\mathcal{RP}_p(n)$, there exists an irreducible $\mathcal{Y}_n$-supermodule $M^\la$. We refer the reader to \cite{K1} for details of the construction. Let
\[b(\la):=\sharp \{r\geq 1\mid p\nmid \la_r>0\}\]
be the number of (non-zero) parts of $\la$ that are not divisible by $p$.

\begin{thm}\cite[Theorem 22.2.1]{K1}\label{thm: Klesh}
The set $\{M^\la|\la\in \mathcal{RP}_p(n)\}$ forms a complete set of pairwise non-isomorphic irreducible $\mathcal{Y}_n$-module. Moreover, for $\la\in \mathcal{RP}_p(n)$, $M^\la$ is of type $\texttt{M}$ if $b(\la)$ is even, type $\texttt{Q}$ if $b(\la)$ is odd.
\end{thm}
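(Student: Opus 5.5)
The plan is to obtain the classification from the representation theory of the affine Sergeev superalgebra $\mhcn$ recalled above, by realising $\mathcal{Y}_n$ as a level-one cyclotomic quotient. There is a surjective superalgebra homomorphism $\mhcn\to\mathcal{Y}_n$ that is the identity on $c_k$ and $s_k$, sends $x_1\mapsto 0$, and sends $x_k$ to the odd Jucys--Murphy elements $\sum_{m<k}(1+c_mc_k)(m,k)$. Using Lemma \ref{lem:PBW}, I would check that its kernel is the two-sided ideal generated by $x_1$. Consequently, irreducible $\mathcal{Y}_n$-modules are exactly the irreducible $\mhcn$-modules annihilated by $x_1$. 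Every such module lies in $\operatorname{Rep}_{\I}\mhcn$: the eigenvalues of $x_k^2$ are contents $\mathtt{q}(i)$ obtained inductively from $\mathtt{q}(0)=0$.

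Next I would run the Grojnowski--Kleshchev machinery on this category.
\begin{enumerate}
\item Decompose modules into generalized weight spaces for $x_1^2,\ldots,x_n^2$, using the characters $L(\underline{i})$ of Lemma \ref{lem:irrepPn}.
\item Define restriction functors $e_i$ for $i\in\I$, taking the $\mathtt{q}(i)$-generalized eigenspace of $x_n^2$ and restricting to $\mathfrak{H}^{\mathfrak{c}}_{n-1}$.
\item Prove that for irreducible $M$ the socle $\tilde e_iM$ of $e_iM$ is irreducible or zero.
\item Prove that $\varepsilon_i(M)=\max\{m\mid e_i^mM\neq0\}$ is detected by the character.
\end{enumerate}
These steps make the set of irreducibles, for all $n$, with the operators $\tilde e_i,\tilde f_i$, into a crystal. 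The cyclotomic condition $x_1=0$ forces this crystal to be the highest weight crystal $B(\Lambda_0)$ of type $A^{(2)}_{p-1}$, with the index set $\I=\{0,\ldots,\frac{p-1}{2}\}$ labelling the Dynkin nodes. The rank-two computations needed to fix the Serre relations would be carried out using \eqref{Deltak}, \eqref{omegak} and the intertwiners, as in Theorem \ref{thm:Classficiation}.

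Then I would identify $B(\Lambda_0)$ combinatorially with $\mathcal{RP}_p(n)$. Columns are assigned residues by the periodic pattern $0,1,\ldots,\frac{p-1}{2},\ldots,1,0$ of period $p$. The crystal operator $\tilde f_i$ adds the good $i$-node according to the signature rule (Kang's realisation). The $p$-strict and $p$-restricted conditions are exactly what makes this set closed under $\tilde f_i$ and connected to $\varnothing$. Defining $M^\la$ as the irreducible reached from the trivial module along any path to $\la$ gives the bijection. This identification, and especially the verification that the module-theoretic branching agrees with the signature rule on the nose, is the main obstacle. I would first try to reduce it to $\varepsilon_i$ computations on characters together with the uniqueness of crystals satisfying Kashiwara's axioms.

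Finally, the type. For irreducible $M$, $M$ and any irreducible $\mpcn$-constituent $L(\underline{i})$ of its restriction have the same type, as in Theorem \ref{thm:Classficiation}(1). The residue sequence $\underline{i}$ of a path to $\la$ is a filling of the nodes of $\la$, so by Lemma \ref{lem:irrepPn} the type is determined by the parity of the number of nodes of residue $0$. In a row of length $m$, each full block of $p$ columns contributes two zeros, and a partial block contributes exactly one zero, namely in its first column. Hence the row contributes an odd number of zeros precisely when $p\nmid m$. Summing over rows, the number of zero-residue nodes has the parity of $b(\la)$, which gives type $\texttt{M}$ for $b(\la)$ even and type $\texttt{Q}$ for $b(\la)$ odd.
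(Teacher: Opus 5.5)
The paper gives no argument of its own here; it quotes \cite{K1}. Your outline follows the route of that source (Brundan--Kleshchev): $\mathcal{Y}_n\cong\mhcn/(x_1)$, a Grojnowski--Kleshchev crystal of type $A^{(2)}_{p-1}$, Kang's combinatorics on restricted $p$-strict partitions, and types read off from residue-$0$ nodes. Your parity count is correct: two zeros per full block of $p$ columns and one per partial block, so the total has the parity of $b(\la)$.

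\textbf{Gap 1: identifying the crystal.} This is at the step you yourself flag as the main obstacle. There is no uniqueness theorem for crystals satisfying Kashiwara's axioms. A connected seminormal crystal with a unique highest weight element of weight $\Lambda$ need not be $B(\Lambda)$. For instance, for $\mathfrak{sl}_3$, identifying the two weight-zero vertices of $B(\omega_1+\omega_2)$ still satisfies all the string axioms. Likewise, knowing $\varepsilon_i$ of each irreducible from its character does not tell you where the $\tilde e_i$-edges go. So ``the cyclotomic condition forces $B(\Lambda_0)$'' is not established by your plan.

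The usual route needs extra structure plus a genuine characterisation theorem:
\begin{enumerate}
\item Grojnowski's second family $\tilde e_i^{*},\varepsilon_i^{*}$, defined like $\tilde e_i,\varepsilon_i$ but using the $\mathtt{q}(i)$-eigenspaces of $x_1^2$ instead of $x_n^2$, via the automorphism of $\mhcn$ reversing the order of the variables.
\item A verification of the Kashiwara--Saito axioms relating the two families. This identifies the crystal of all irreducibles in $\operatorname{Rep}_{\I}\mhcn$ with $B(\infty)$.
\item Kashiwara's description $B(\Lambda_0)\cong\{b\in B(\infty)\mid \varepsilon_i^{*}(b)\le\delta_{i0}\text{ for all }i\}$, which is exactly where $x_1=0$ enters.
\end{enumerate}
Alternatively, you could use your Serre relations to identify the Grothendieck group with $V(\Lambda_0)$ and show that the irreducibles form a perfect basis in the sense of Berenstein--Kazhdan. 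Once an abstract isomorphism with $B(\Lambda_0)$ is in hand, you do not need to compare module branching with the signature rule by hand. Kang's realisation supplies that, and independence of $M^\la$ from the chosen path is automatic.

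\textbf{Gap 2: the type step.} This one is smaller. Theorem \ref{thm:Classficiation}(1) concerns only the completely splittable modules $V^{\underline{i}}$, so for $n>p$ it says nothing about most $M^\la$. You need the general fact that an irreducible $M$ in $\operatorname{Rep}_{\I}\mhcn$ has the same type as $L(\underline{i})$ for any $\underline{i}$ in its character. In the source this is proved along the crystal. One compares $M$ with the irreducible $\tilde e_iM\circledast L(i)$ in the socle of its restriction. By Lemma \ref{tensorsmod}, this shows that $\tilde f_i$ preserves type for $i\neq 0$ and switches it for $i=0$. With this inserted, your count of residue-$0$ nodes along a path to $\la$ finishes the type statement.
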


The Jucys-Murphy elements $L_k(1\leq k\leq n)$ in
$\mathcal{Y}_n$ are defined as
\begin{align}
L_k=\sum_{1\leq j< k}(1+c_jc_k)(jk),\label{JM}
\end{align}
where $(jk)$ is the transposition exchanging $j$ and $k$ and keeping
all others fixed.

\begin{defn} A $\mathcal{Y}_n$-module is called
{\em completely splittable} if the Jucys-Murphy elements $L_k (1\leq
k\leq n)$ act semisimply.
\end{defn}

In the following, we shall identify those irreducible $\mathcal{Y}_n$-module $M^\la$ in Theorem \ref{thm: Klesh}  which are completely splittable by using the known classification Theorem \ref{thm:Classficiation} for the case of affine Sergeev superalgebra $\mhcn$. It is well known that there exists a surjective homomorphism
\begin{align}
\digamma: \mhcn&\rightarrow \mathcal{Y}_n\notag \\
c_k\mapsto c_k, s_l&\mapsto s_l, x_k\mapsto L_k, \quad (1\leq
k\leq n, 1\leq l\leq n-1)\label{eq:surj}
\end{align}
whose kernel $\op{Ker}\digamma=(x_1)$ coincides with the ideal of $\mhcn$ generated by $x_1$. 

\begin{rem}\label{rem:subcat}
By \eqref{eq:surj} the category of finite dimensional $\mathcal{Y}_n$-modules can
be identified as the category of finite dimensional $\mhcn$-modules
which are annihilated by $x_1$. By~\cite[Lemma 4.4]{BK} (cf.
\cite[Lemma 15.1.2]{K2}), a $\mhcn$-module $M$ belongs to the
category $\text{Rep}_{\I}\mhcn$ if all of eigenvalues of $x_j$ on
$M$ are of the form $\mathtt{q}(i)$ for some $1\leq j\leq n$. Hence the
category of finite dimensional completely splittable
$\mathcal{Y}_n$-module can be identified with the subcategory of
$\text{Rep}_{\I}\mhcn$ consisting of completely splittable
$\mhcn$-modules on which $x_1=0$. 
\end{rem}
 
Clearly by Remark \ref{rem:subcat} together with Theorem \ref{thm:Classficiation}, it is reduced to identify these $V^{\underline{i}}$ on which 
$x_1=0$ in order to obtain a classification of irreducible completely splittable $\mathcal{Y}_n$-modules. In fact, a nice combinatorial description has been provided in \cite[Section 6]{Wa} and we will review the details in the following. 

A partition $\la=(\la_1,\la_2,\ldots)$ with $\ell(\la)=\ell$ is said to be strict if $\la_1>\la_2>\cdots>\la_\ell>0$. Denote by $\mathcal{SP}(n)$ the set of strict partition of $n$. Similar to  the case of partitions, a strict partition $\la\in \mathcal{SP}(n)$ can be identified with the shifted Young diagram which is obtained from the ordinary Young diagram by shifting the $k$-th row to the right by $k-1$ squares for all $k>1$, that is, 
\begin{equation}
\la^{\mathsf{s}}=\{(i,j)\mid 1\leq i\leq\ell(\la), i\leq j\leq i+\la_i-1 \}. 
\end{equation} 
The $(i,j)$-hook of a shifted Young diagram contains all nodes that are either in the same row as $(i,j)$ and to the right of $(i,j)$, or in the same column as $(i,j)$ and below $(i,j)$ including $(i,j)$.  Additionally if $(j,j)$ is included in the hook then nodes in the $(j+1)$-row are also included. Denote by $h^{\mathsf{s}}_\la(i,j)$ the number of nodes in the $(i,j)$-hook.
\begin{example}
Suppose $\la=(7,5,3,2)$, then the $(1,2)$-hook and $(1,4)$-hook of shifted Young diagram $\la^{\mathsf{s}}$ are as below
\[\ydiagram{7,1+5,2+3,3+2}*[\bullet]{1+6,1+1,2+3}, 
\qquad
\ydiagram{7,1+5,2+3,3+2}*[\bullet]{3+4,3+1,3+1,3+1},\]
and accordingly $h^{\mathsf{s}}_\la(1,2)=10$ and $h^{\mathsf{s}}_\la(1,4)=7$. 
\end{example}

Denote by $\mathcal{T}^{\mathsf{s}}(\la)$ the set of shifted tableaux of shape $\la^{\mathsf{s}}$; that is, a shifted tableau is a labelling of the nodes in the shifted Young diagram $\la^{\mathsf{s}}$ with the entries $1,2,\dots, n$. Let $T_{(i,j)}$ denote the entry in the node $(i,j)$ and let $T(k)$ be the node which is occupied by the number $k$ for each $1\leq k\leq n$. So if $T_{(i,j)}=k$ then $T(k)=(i,j)$. 
A shifted tableau $T$ is called \emph{standard} if its entries strictly increase from left to right along each row and down each column. We denote by $\op{Std}^{\mathsf{s}}(\la)$ the subset of $\mathcal{T}^{\mathsf{s}}(\la)$ consisting of standard tableaux of shape $\la^{\mathsf{s}}$. We then have the following remarkable hook length formula (cf. \cite[Chapter III, Section 8, Example 12]{Mac})
\begin{equation}\label{eq:hook2}
\sharp \op{Std}^{\mathsf{s}}(\la)=\frac{n!}{\prod_{(i,j)\in\la^{\mathsf{s}}} h^{\mathsf{s}}_\la(i,j)}
\end{equation}
where the product in the denominator is over all nodes in the shifted Young diagram $\la^{\mathsf{s}}$.

Following \cite[Lemma 6.6-6.7]{Wa}, we set 
\begin{align}
\mathcal{CP}^{\mathsf{s}}_p(n)=\Big\{\xi=(\xi_1,\xi_2,\ldots)\in\mathcal{SP}(n)\mid~ & \xi_1=p-u, \xi_2\leq u\text{ for some }1\leq u\leq\frac{p-3}{2} \notag\\
&\text{ or }1\leq \xi_1\leq\frac{p+1}{2}\Big\}\label{eq:cpn}
\end{align}
and in addition define
\begin{align}\label{eq:p-std}
\op{Std}^{\mathsf{s}}_p(\xi)=
\left\{
\begin{array}{ll}
\big\{T\in\op{Std}^{\mathsf{s}}(\xi)\mid T_{(2,\xi_2+1)}>T_{(1,\xi_1)}\big\},&\text{ if } \xi_1=p-u,\xi_2=u \text{ for some }\\
&\quad 1\leq u\leq \frac{p-3}{2},\\
\op{Std}^{\mathsf{s}}(\xi),&\text{ otherwise}. 
\end{array}
\right.
\end{align}
for  each $\xi\in\mathcal{CP}^{\mathsf{s}}_p(n)$. 
A tableau $T$ is called \emph{$p$-standard} if $T\in\op{Std}^{\mathsf{s}}_p(\xi)$ for some $\xi\in\mathcal{CP}^{\mathsf{s}}_p(n)$.  
We label the residue of nodes in the shifted Young diagram of $\la\in\mathcal{CP}^{\mathsf{s}}_p(n)$ 
using the set $\mathbb{I}$ in \eqref{defn:I} via  the way that the first node in each row has residue $0$ and then follow the repeating pattern
\begin{equation}\label{eq:residue}
0, 1,\ldots,\frac{p-3}{2},\frac{p-1}{2},\frac{p-3}{2},\ldots,1,0. 
\end{equation}
The residue  in \eqref{eq:residue} is actually to compute
$\mathtt{q}$-values of the usual residue $j-i$ of the nodes $(i,j)$ and the reason for this pattern is due to the observation $\mathtt{q}(a)=\mathtt{q}(b)$ if $a=b\mod p$ or $a+b+1=0\mod p$ for any $a,b\in\mathbb{Z}$. Let $\xi\in\mathcal{CP}^{\mathsf{s}}_p(n)$ and suppose $T\in \op{Std}^{\mathsf{s}}_p(\xi)$. Let 
$$
\underline{i}_T=(\op{res}(T(1)),\op{res}(T(2)),\cdots,\op{res}(T(n)))\in\mathbb{I}^n
$$ be the residue sequence corresponding to $T$. 

\begin{example}
Let $p=7$, so $\frac{p-1}{2}=3$. Then $\xi=(5,2,1)$ belongs to $\mathcal{CP}^{\mathsf{s}}_p(8)$. The residues of nodes in the shifted Young diagram $\xi^{\mathsf{s}}$ are as follows:
\[
\ydiagram{5,1+2,2+1}*[0]{1,1+1,2+1}*[1]{1+1,2+1}*[2]{2+1}*[3]{3+1}*[2]{4+1}. 
\]
In addition for 
$$
T=\ydiagram{5,1+2,2+1}*[1]{1}*[2]{1+1}*[3]{2+1}*[4]{3+1}*[6]{4+1}*[5]{0,1+1}*[7]{0,2+1}*[8]{0,0,2+1}\in\op{Std}_p^{\mathsf{s}}(\xi), 
$$
we have $\underline{i}_T=(0,1,2,3,0,2,1,0)$. 
\end{example}
\begin{lem}\cite[Lemma 6.6-6.7]{Wa}\label{lem:equiv}
Let $\xi\in\mathcal{CP}^{\mathsf{s}}_p(n)$ and suppose $T\in \op{Std}^{\mathsf{s}}_p(\xi)$. Then 
\begin{enumerate}
\item
$
\underline{i}_T\in\mathfrak{P}(\mhcn)
$ and moreover if $\underline{j}\in\mathfrak{P}(\mhcn)$, then $\underline{j}\sim \underline{i}_T$ if and only if $\underline{j}=\underline{i}_S$ for some $S\in\op{Std}^{\mathsf{s}}_p(\xi)$. Hence 
$\Lambda_{\underline{i}_T}=\{\underline{i}_S\mid S\in\op{Std}^{\mathsf{s}}_p(\xi)\}$. 

\item If $\xi\neq\gamma\in\mathcal{CP}^{\mathsf{s}}_p(n)$, then $\underline{i}_T\nsim\underline{i}_S$ for any $T\in\op{Std}_p^{\mathsf{s}}(\xi)$ and $S\in\op{Std}_p^{\mathsf{s}}(\gamma)$. 

\end{enumerate}
\end{lem}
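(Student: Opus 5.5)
The plan is to reduce both parts to the combinatorics of residue sequences and to the characterization of $\mathfrak{P}(\mhcn)$ in Definition~\ref{defn:cs-wt}. Write $T\in\op{Std}^{\mathsf{s}}_p(\xi)$ with $\xi\in\mathcal{CP}^{\mathsf{s}}_p(n)$.

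\textbf{Step 1: $\underline{i}_T\in\mathfrak{P}(\mhcn)$.} I would check conditions (1)--(4) of Definition~\ref{defn:cs-wt} directly from the residue pattern \eqref{eq:residue}.

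Condition (1) holds because consecutive entries $k,k+1$ are either adjacent in a row or column, or lie on different diagonals. Since $\xi_1\le p$, residues along a row or column strip are never repeated consecutively, except at the pattern's turning point. That turning point appears only when $\xi_1=p-u$, and the extra inequality $T_{(2,\xi_2+1)}>T_{(1,\xi_1)}$ in \eqref{eq:p-std} is exactly what rules out the bad adjacency.

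Condition (2) uses the constraint defining $\mathcal{CP}^{\mathsf{s}}_p(n)$: the residue $\frac{p-1}{2}$ occupies column $\frac{p+1}{2}$, and for $\xi_2\le u<\frac{p-1}{2}$ only the first row can reach it.

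Conditions (3) and (4) come from the shape. Two nodes with equal residue $0$ lie at the start of different rows (or at the far end of row 1), so between them in $T$ some node of residue $1$ must have been filled. For equal residues $a\ge1$, the nodes lie on consecutive diagonals, and standardness forces $a\pm1$ to be filled in between, giving (4)(a). The exception is when the two nodes are the mirrored pair in row 1 around $\frac{p-1}{2}$; there the whole segment $a,a+1,\dots,\frac{p-1}{2},\dots,a$ of row 1 supplies the sequence in (4)(b).

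\textbf{Step 2: the equivalence class.} I would show that admissible transpositions act on residue sequences exactly as $s_k$ acts on tableaux. If $s_kT$ is again $p$-standard, then $k,k+1$ are non-adjacent, and one checks that their residues differ by other than $\pm1$ (otherwise they would be adjacent, or give the forbidden configuration). Conversely, if $s_k$ is admissible for $\underline{i}_T$, then $k,k+1$ cannot be in the same row or column, since adjacent nodes have residues differing by $1$. So $s_kT$ is standard, and it stays $p$-standard by the same check as in Step~1. This gives $\{\underline{i}_S\}\subseteq\Lambda_{\underline{i}_T}$ and closure.

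For the reverse inclusion, I use Lemma~\ref{lem:phi}: every $\underline{j}\sim\underline{i}_T$ is reached by a chain of admissible moves, and each move keeps the sequence in the tableau image. Hence $\Lambda_{\underline{i}_T}=\{\underline{i}_S\mid S\in\op{Std}^{\mathsf{s}}_p(\xi)\}$. Injectivity of $S\mapsto\underline{i}_S$ is not needed for the statement.

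\textbf{Step 3: part (2).} The content of $\underline{i}_S$, meaning the multiset of residues, is invariant under $\sim$, since $\sim$ only permutes entries. So it suffices to show that the residue content determines $\xi$ within $\mathcal{CP}^{\mathsf{s}}_p(n)$. I would recover $\xi$ greedily from the content. The number of $0$'s equals $\ell(\xi)$, plus $1$ if $\xi_1\ge p-1$, in which case there are also two $1$'s in row 1, and so on. More cleanly, the multiplicity of residue $a$ gives the number of rows of length $>a$, corrected by the row-1 reflection when $\xi_1>\frac{p+1}{2}$. That case is detectable because $\frac{p-1}{2}$ then appears. Since $\xi_2\le u$ there, the correction affects only row 1, so the content still determines $\xi$.

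\textbf{Main obstacle.} I expect the main obstacle to be the converse direction in Step~2, together with condition (4)(b) in the reflected case $\xi_1=p-u$. Here one must carefully verify that an admissible swap never moves $T_{(1,\xi_1)}$ past $T_{(2,\xi_2+1)}$ when $\xi_2=u$. Both nodes have residue $u$, and the relevant entries between them need not be adjacent, so I would handle this by a direct case check on the nodes with residue $u\pm1$.
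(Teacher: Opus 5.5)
The paper imports this lemma from [Wa] without proof, so I am comparing your proposal with what a complete argument needs. \textbf{Step~3 has a genuine gap.} The residue content is $\sim$-invariant, but it does \emph{not} determine $\xi$ within $\mathcal{CP}^{\mathsf{s}}_p(n)$.
\begin{itemize}
\item For $p=5$, both $(4,1)$ and $(3,2)$ lie in $\mathcal{CP}^{\mathsf{s}}_5(5)$, and both have content $\{0,0,1,1,2\}$.
\item More generally, for $n=p$ every $(p-u,u)$ with $1\le u\le\frac{p-1}{2}$ has the same content: each of $0,\dots,\frac{p-3}{2}$ twice and $\frac{p-1}{2}$ once.
\item Your detection criterion fails because $\frac{p-1}{2}$ also occurs when $\xi_1=\frac{p+1}{2}$.
\end{itemize}

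What part (2) actually needs is the statement you set aside as unnecessary: $S\mapsto\underline{i}_S$ is injective on $\bigsqcup_{\xi\in\mathcal{CP}^{\mathsf{s}}_p(n)}\op{Std}^{\mathsf{s}}_p(\xi)$. Given this, (2) follows from (1). If $\underline{i}_T\sim\underline{i}_S$, then (1) gives $\underline{i}_S=\underline{i}_{T'}$ for some $T'\in\op{Std}^{\mathsf{s}}_p(\xi)$. Injectivity forces $S=T'$, hence $\gamma=\xi$.

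Injectivity is proved by rebuilding the tableau node by node from its residue sequence. Write "position" for $j-i$.
\begin{itemize}
\item Two addable nodes of a partial shape can carry the same residue $a$ only if one is at position $p-1-a$ of row $1$ and the other at position $a$ of some row $r\ge2$. Residue $0$ always opens a new row, since $\xi_1\le p-1$.
\item Choosing row $1$ forces $\xi_1\ge p-a$, hence $\xi_2\le p-\xi_1\le a$. So $r=2$ and $\xi_2=u=a$.
\item The node being placed is then $(1,p-u)$, after $(2,u+1)$ has already been filled. That is exactly what $p$-standardness forbids.
\end{itemize}
This is the real role of the extra inequality. The non-$p$-standard tableau of shape $(4,1)$ with rows $1,2,3,5$ and $4$ has residue sequence $(0,1,2,0,1)$. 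This coincides with the sequence of the tableau of shape $(3,2)$ with rows $1,2,3$ and $4,5$.

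Several other points need fixing.
\begin{itemize}
\item \emph{Condition (1).} The inequality $T_{(2,\xi_2+1)}>T_{(1,\xi_1)}$ has nothing to do with condition (1) of Definition~\ref{defn:cs-wt}. The pattern \eqref{eq:residue} has no equal neighbours at positions $\le p-2$, and the only equal residues on different rows sit at the same position, which consecutive entries cannot occupy. Indeed the non-$p$-standard sequence $(0,1,2,0,1)$ above still lies in $\mathfrak{P}(\mathfrak{H}^{\mathfrak{c}}_5)$, so Step~1 never uses the extra inequality.
\item \emph{Your ``main obstacle''.} The residues are wrong: $(1,p-u)$ has residue $u$, but $(2,u+1)$ has residue $u-1$. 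Since these differ by $1$, no admissible $s_k$ can ever interchange those two entries. So preservation of $p$-standardness under admissible moves is immediate, not the hard case.
\item \emph{Connectivity in Step~2.} The local equivalence ($s_k$ admissible for $\underline{i}_S$ iff $s_kS\in\op{Std}^{\mathsf{s}}_p(\xi)$) only gives $\Lambda_{\underline{i}_T}\subseteq\{\underline{i}_S\}$. The reverse inclusion also needs connectivity of $\op{Std}^{\mathsf{s}}_p(\xi)$ under such moves, which you assert but do not prove. One way is induction on $n$:
  \begin{itemize}
  \item Let $c=T(n)$; then $c\neq(1,p-u)$ when the constraint is active.
  \item Push the entry of $S$ at $c$ up to $n$ by successive simple transpositions. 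Standardness survives because nothing lies right of or below the corner $c$. $p$-standardness survives because $c\neq(1,p-u)$ and the entry at $c$ only increases.
  \item Then restrict both tableaux to $\xi\setminus c$.
  \end{itemize}
\end{itemize}
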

Assume $\xi\in\mathcal{CP}^{\mathsf{s}}_p(n)$ and $T\in \op{Std}^{\mathsf{s}}_p(\xi)$. Recall the module $V^{\underline{i}}$ for each $i\in\mathfrak{P}(\mhcn)$ defined in \eqref{Dunderi} and set 
\begin{equation}\label{eq:VD-iso}
V^\xi:=V^{\underline{i}_T}. 
\end{equation}
Then $V^\xi$ admits a $\mhcn$-module by Theorem \ref{thm:Classficiation}. 
Clearly by Lemma \ref{lem:equiv} and Theorem \ref{thm:Classficiation} the $\mhcn$-module $V^\xi$ up to isomorphism is independent of the choice of $T\in \op{Std}^{\mathsf{s}}_p(\xi)$.

\begin{thm}(cf. \cite[Theorem 6.8]{Wa})
The set $\{V^{\xi}\mid \xi\in\mathcal{CP}^{\mathsf{s}}_p(n)\}$ is a complete set of non-isomorphic irreducible completely splittable $\mathcal{Y}_n$-modules. Moreover 
$$
\dim V^{\xi}=2^{n-\lfloor\frac{\ell(\xi)}{2}\rfloor}\sharp\op{Std}^{\mathsf{s}}_p(\xi). 
$$
\end{thm}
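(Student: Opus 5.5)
The statement is the Sergeev analogue of Theorem~\ref{thm:K2}, so the plan is to deduce it from the affine classification, Theorem~\ref{thm:Classficiation}, through the surjection $\digamma$ of \eqref{eq:surj}. By Remark~\ref{rem:subcat}, irreducible completely splittable $\mathcal{Y}_n$-modules are exactly the irreducible completely splittable $\mhcn$-modules in $\operatorname{Rep}_{\I}\mhcn$ on which $x_1$ acts as $0$. By Theorem~\ref{thm:Classficiation}(3), each such module is some $V^{\underline{i}}$ with $\underline{i}\in\mathfrak{P}(\mhcn)$. So the whole task reduces to one question: for which $\underline{i}$ does $x_1$ vanish on $V^{\underline{i}}$?

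The first step is to identify these weights. On $L(\underline{i})^{\tau}$ the element $x_1^2$ acts by $\mathtt{q}(i_{\tau^{-1}(1)})$, and $x_1$ is semisimple. Hence $x_1=0$ on $V^{\underline{i}}$ exactly when every $\underline{j}\in\Lambda_{\underline{i}}$ has $j_1=0$, using \eqref{eq:qi=qj} and $\mathtt{q}(i)=0\Leftrightarrow i=0$ for $i\in\I$.

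The second step is to show that these classes are precisely $\{\underline{i}_T\mid T\in\op{Std}^{\mathsf{s}}_p(\xi)\}$ for $\xi\in\mathcal{CP}^{\mathsf{s}}_p(n)$. One inclusion follows from Lemma~\ref{lem:equiv}: every $\underline{i}_S$ starts with the residue of node $(1,1)$, which is $0$. For the converse, I would argue by induction on $n$. If $x_1=0$ on $V^{\underline{i}}$, then restricting to $\mathfrak{H}^{\mathfrak c}_{n-1}$ shows that the prefix $(i_1,\dots,i_{n-1})$ is again of this kind, hence equals $\underline{i}_{T'}$ for a $p$-standard $T'$. One then checks, using conditions (1)--(4) of Definition~\ref{defn:cs-wt} together with the requirement that no admissible reshuffle moves a nonzero entry to position $1$, that $i_n$ must be the residue of an addable node of $\xi'$ keeping the shape in $\mathcal{CP}^{\mathsf{s}}_p(n)$ and the tableau $p$-standard.

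This combinatorial matching is the main obstacle. In practice I would cite \cite[Lemma 6.6--6.7, Theorem 6.8]{Wa}, where it is carried out, rather than redo it. Granting it, Lemma~\ref{lem:equiv}(2) and Theorem~\ref{thm:Classficiation}(2) give that the $V^\xi$ are pairwise non-isomorphic and exhaust the list.

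For the dimension, \eqref{Dunderi} gives $\dim V^{\xi}=\sharp P_{\underline{i}}\cdot\dim L(\underline{i})$. Lemma~\ref{lem:phi} gives $\sharp P_{\underline{i}}=\sharp\Lambda_{\underline{i}}$. Lemma~\ref{lem:equiv}(1) identifies $\Lambda_{\underline{i}}$ with $\{\underline{i}_S\}$, and $S\mapsto\underline{i}_S$ is injective on $\op{Std}^{\mathsf{s}}_p(\xi)$ because residue sequences determine the tableau. Finally, by Lemma~\ref{lem:irrepPn}, $\dim L(\underline{i})=2^{n-\lfloor\gamma_0/2\rfloor}$, and $\gamma_0=\ell(\xi)$ is the number of $0$-residues in $\underline{i}_T$. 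Here one checks from the pattern \eqref{eq:residue} and the bound $\xi_1\le p-1$ that a residue $0$ occurs only at the diagonal node of each row. Multiplying these factors gives the stated formula.
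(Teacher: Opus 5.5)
Your proposal is correct and follows essentially the paper's route. You get $x_1=0$ on $V^\xi$ because every $\underline{i}_S$ begins with the residue $0$ of the node $(1,1)$, non-isomorphism from Lemma~\ref{lem:equiv}(2) together with Theorem~\ref{thm:Classficiation}(2), and exhaustiveness ultimately from \cite[Theorem 6.8]{Wa}, just as the paper does. Your explicit dimension count $\sharp P_{\underline{i}}\cdot\dim L(\underline{i})$ fills in what the paper leaves to that citation, and it is sound. The map $S\mapsto\underline{i}_S$ is injective on $\op{Std}^{\mathsf{s}}_p(\xi)$ because distinct addable nodes lying inside $\xi^{\mathsf{s}}$ have distinct residues for $\xi\in\mathcal{CP}^{\mathsf{s}}_p(n)$, and $\gamma_0=\ell(\xi)$ holds since $\xi_1\le p-1$.
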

\begin{proof}
Fix $T\in\mathcal{CP}_p^{\mathsf{s}}(n)$. 
 Observe that for each $\tau\in P_{\underline{i}_T}$, there exists a unique $S\in\op{Std}^{\mathsf{s}}_p(\xi)$ such that $\tau\cdot \underline{i}_T=\underline{i}_S\in \Lambda_{\underline{i}_T}$ by Lemma \ref{lem:phi}  and Lemma \ref{lem:equiv}. This together with \eqref{eq:Ltau} gives rise to an isomorphism 
\begin{align}\label{eq:psi}
\psi^\tau_{T}: L(\underline{i}_T)^\tau\xrightarrow{\cong} L(\underline{i}_S)
\end{align}
Then by \eqref{eq:P1-irrep} and Lemma \ref{lem:irrepPn} we obtain $x_1=0$ on $L(\underline{i}_T)^\tau$ and hence $x_1=0$ on $V^\xi$. This means $V^\xi$ is actually a $\mathcal{Y}_n$-module.  Then by Theorem \ref{thm:Classficiation} and Lemma  \ref{lem:equiv}, one can obtain that $V^\xi\cong V^{\gamma}$ if and only if $\xi=\gamma\in\mathcal{CP}^{\mathsf{s}}_p(n)$.  
Then the theorem follows from \cite[Theorem 6.8]{Wa} by comparing the parametrizing set. 
\end{proof}
 Clearly for each $n\geq 1$, 
$
\mathcal{CP}^{\mathsf{s}}_p(n)\subseteq\mathcal{RP}_p(n)
$ and by \cite[Remark 6.9]{Wa}  we have 
\begin{equation}\label{eq:VM-iso}
V^\xi\cong M^\xi
\end{equation} for each $\xi\in \mathcal{CP}^{\mathsf{s}}_p(n)$.

\subsection{Irreducible representations of $\mathcal{Y}_n$ in the case $n=p$.}
 

\begin{lem}\label{p1}
For $n \geq 1$, the following holds:
\begin{align}
\mathcal{RP}_p(n) =
\begin{cases}
\mathcal{SP}(n), & \text{if } p > n; \\
\mathcal{SP}(n) \setminus \{(n)\}, & \text{if } p = n.
\end{cases}
\end{align}
\end{lem}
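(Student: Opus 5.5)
We argue directly from the definitions of $p$-strict and $p$-restricted partitions, comparing them with strict partitions when $n\le p$. The idea is that, since every part is at most $n\le p$, divisibility by $p$ can only occur for a part equal to $p$ itself, and only when $n=p$.

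\emph{Step 1: $\mathcal{RP}_p(n)\subseteq\mathcal{SP}(n)$ for $n\le p$.} Let $\la\in\mathcal{RP}_p(n)$ and suppose $\la_r=\la_{r+1}$ for some $r$ with $\la_r>0$. Then $p$-strictness forces $p\mid\la_r$. Since $0<\la_r\le n\le p$, this means $\la_r=p=n$. But then $\la_r+\la_{r+1}=2p>n$, which is impossible. Hence all nonzero parts are distinct, so $\la$ is strict.

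\emph{Step 2: $\mathcal{SP}(n)\subseteq\mathcal{RP}_p(n)$ when $p>n$.} A strict $\la$ is trivially $p$-strict, because there is no $r$ with $\la_r=\la_{r+1}>0$. The condition for trailing zeros, $\la_r=\la_{r+1}=0$, is also satisfied, since $p\mid 0$. For restrictedness, take any $r$. If $p\nmid\la_r$ we need $\la_r-\la_{r+1}\le p$, which holds because $\la_r-\la_{r+1}\le\la_r\le n<p$. If $p\mid\la_r$, then $\la_r\le n<p$ forces $\la_r=0$, so $\la_{r+1}=0$ and the difference is $0<p$. So every strict partition lies in $\mathcal{RP}_p(n)$.

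\emph{Step 3: the case $p=n$.} The argument of Step 2 goes through verbatim for every strict $\la$ with $\la_1<p$. The reason is that each nonzero part is then less than $p$, so it is not divisible by $p$, and each difference $\la_r-\la_{r+1}\le\la_1<p$. The only strict partition with a part at least $p=n$ is $\la=(n)$. For this partition, $r=1$ gives $p\mid\la_1$ and $\la_1-\la_2=p\not<p$, so $(n)$ fails to be $p$-restricted. Combining this with Step 1 yields $\mathcal{RP}_p(n)=\mathcal{SP}(n)\setminus\{(n)\}$.

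The only delicate point is bookkeeping with the convention for trailing zero parts, namely that $\la_r=\la_{r+1}=0$ is allowed because $p\mid 0$. The only genuinely excluded case is $(n)$ when $p=n$; otherwise the proof is routine.
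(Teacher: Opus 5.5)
Your proof is correct and follows essentially the same route as the paper: you check $p$-strictness and $p$-restrictedness directly using the bound $\la_r\le n\le p$, and you exclude $(n)$ when $p=n$ via $\la_1-\la_2=p$ with $p\mid p$. The only cosmetic difference is in Step 1, where you treat $n\le p$ uniformly by noting that two equal parts equal to $p$ would sum to $2p>n$; the paper instead first removes $(n)$ to get $\la_1<p$.
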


\begin{proof}
Suppose $p > n$. For any $\lambda \in \mathcal{RP}_p(n)$, if $\lambda_r = \lambda_{r+1}$ for some $r$, then $p \mid \lambda_r$. Since $\lambda_r \leq n < p$, it follows that $\lambda_r = 0$, which implies $\lambda \in \mathcal{SP}(n)$. Thus, $\mathcal{RP}_p(n) \subseteq \mathcal{SP}(n)$. Conversely, any $\mu \in \mathcal{SP}(n)$ is clearly $p$-strict since no non-zero parts are equal. Moreover, $\mu$ is $p$-restricted because $\mu_r - \mu_{r+1} \leq \mu_1 \leq n < p$ for all $r$. Thus, $\mu \in \mathcal{RP}_p(n)$, showing that $\mathcal{SP}(n) \subseteq \mathcal{RP}_p(n)$. Therefore, $\mathcal{RP}_p(n) = \mathcal{SP}(n)$ when $n < p$.

Now assume $n = p$. Note that the partition $(n)$ is not $p$-restricted since $\lambda_1 - \lambda_2 = n - 0 = p$, and $p \mid p$. Consequently, any $\lambda \in \mathcal{RP}_p(n)$ must satisfy $\lambda_1 \leq n - 1$. If $\lambda_r = \lambda_{r+1}$, then $p \mid \lambda_r$; since $\lambda_r \leq \lambda_1 < p$, we must have $\lambda_r = 0$, implying $\lambda \in \mathcal{SP}(n)$. Conversely, for any $\mu \in \mathcal{SP}(n)$ such that $\mu \neq (n)$, we have $\mu_1 \leq n - 1$. It follows that $\mu$ is $p$-strict and $\mu_r - \mu_{r+1} \leq \mu_1 \leq n - 1 < p$ for all $r$. Thus, $\mu \in \mathcal{RP}_p(n)$, which yields $\mathcal{RP}_p(n) = \mathcal{SP}(n) \setminus \{(n)\}$.
\end{proof}

\begin{lem}\label{p2}
For $n \geq 1$, the following holds:
\begin{align}
\mathcal{CP}^{\mathsf{s}}_p(n) =
\begin{cases}
\mathcal{SP}(n), & \text{if } p > n; \\
\mathcal{SP}(n) \setminus \{(n)\}, & \text{if } p = n.
\end{cases}
\end{align}
\end{lem}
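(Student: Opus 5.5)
The plan is to compare the defining condition \eqref{eq:cpn} of $\mathcal{CP}^{\mathsf{s}}_p(n)$ directly with the structure of strict partitions of $n$. By definition $\mathcal{CP}^{\mathsf{s}}_p(n)\subseteq\mathcal{SP}(n)$, so in both cases only the reverse inclusion needs work, together with the exclusion of $(n)$ when $p=n$. Write a strict partition as $\xi=(\xi_1,\xi_2,\ldots)$ with $\xi_1>\xi_2>\cdots$. The key elementary observation is
\begin{equation*}
\xi_2\leq n-\xi_1 \quad\text{and}\quad \xi_2<\xi_1 .
\end{equation*}

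First consider $p>n$. Let $\xi\in\mathcal{SP}(n)$. If $\xi_1\leq\frac{p+1}{2}$ we are done. Otherwise put $u=p-\xi_1$. Then $u\leq p-\frac{p+3}{2}=\frac{p-3}{2}$, using that $\xi_1\geq\frac{p+3}{2}$ since $p$ is odd. Also $u\geq p-n\geq1$ because $\xi_1\leq n<p$. Finally $\xi_2\leq n-\xi_1<p-\xi_1=u$. So the first alternative in \eqref{eq:cpn} holds with this $u$, and $\xi\in\mathcal{CP}^{\mathsf{s}}_p(n)$.

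Now consider $p=n$. The partition $(n)=(p)$ is not in $\mathcal{CP}^{\mathsf{s}}_p(n)$: we have $\xi_1=p>\frac{p+1}{2}$, and $\xi_1=p-u$ with $u\geq1$ is impossible. Conversely, take $\xi\in\mathcal{SP}(n)$ with $\xi\neq(n)$, so $\xi_1\leq n-1=p-1$ and $\xi_2\geq1$. If $\xi_1\leq\frac{p+1}{2}$ we are done. Otherwise set $u=p-\xi_1\geq1$. As before $u\leq\frac{p-3}{2}$, and now $\xi_2\leq n-\xi_1=u$. This is exactly the required bound $\xi_2\leq u$, with equality allowed. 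So the first alternative in \eqref{eq:cpn} holds.

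The only delicate point is the boundary bookkeeping. We need the parity of $p$ (here $p\geq3$) to pass from $\xi_1>\frac{p+1}{2}$ to $u\leq\frac{p-3}{2}$. We also need to notice that the condition $\xi_2\leq u$ is non-strict, which is precisely what admits the case $\xi_2=n-\xi_1$ when $n=p$. For $p=3$ the range $1\leq u\leq0$ is empty. This case is consistent: it forces $\xi_1\leq2$ in the case $n=p=3$, and then $\mathcal{SP}(3)\setminus\{(3)\}=\{(2,1)\}$, as required.
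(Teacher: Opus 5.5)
Your proof is correct and follows essentially the same route as the paper. In both cases you split on whether $\xi_1\leq\frac{p+1}{2}$, and otherwise take $u=p-\xi_1$ and use $\xi_2\leq n-\xi_1$, which is strict when $n<p$ and non-strict when $n=p$; your explicit remarks on the oddness of $p$ and on the degenerate case $p=3$ slightly tighten the paper's bookkeeping.
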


\begin{proof}
Suppose $p > n$. By \eqref{eq:cpn}, it suffices to show that every $\lambda \in \mathcal{SP}(n)$ is contained in $\mathcal{CP}^{\mathsf{s}}_p(n)$. This clearly holds if $\lambda_1 \leq \frac{p+1}{2}$. Otherwise, suppose $\frac{p+3}{2} \leq \lambda_1 \leq n \leq p-1$ and let $u = p - \lambda_1$. Then $1 \leq u \leq \frac{p-3}{2}$. Since $\lambda \in \mathcal{SP}(n)$, we have $\lambda_2 \leq n - \lambda_1 \leq p - 1 - \lambda_1 < p - \lambda_1 = u$. Thus, $\lambda$ satisfies the conditions for $\mathcal{CP}^{\mathsf{s}}_p(n)$.

Now assume $n = p$. Clearly $(n) \notin \mathcal{CP}^{\mathsf{s}}_p(n)$ because $n > p-1$. For any $\lambda \in \mathcal{SP}(n)$ with $\lambda \neq (n)$, we have $\lambda_1 \leq n - 1 = p - 1$. As in the previous case, either $\lambda_1 \leq \frac{p+1}{2}$ or the condition $1 \leq u = p - \lambda_1 \leq \frac{p-3}{2}$ is satisfied. In the latter case, $\lambda_2 \leq n - \lambda_1 = p - \lambda_1 = u$, which implies $\lambda \in \mathcal{CP}^{\mathsf{s}}_p(n)$. Therefore, $\mathcal{CP}^{\mathsf{s}}_p(n) = \mathcal{SP}(n) \setminus \{(n)\}$.
\end{proof}

\begin{prop}\label{prop:RP-CP}
$\mathcal{RP}_p(n)=\mathcal{CP}^{\mathsf{s}}_p(n)$ if and only if $n\leq p$. Hence, all irreducible $\mathcal{Y}_n$-modules are completely splittable if and only if $n\leq p$.
\end{prop}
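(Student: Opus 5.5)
The ``if'' direction follows directly from Lemma \ref{p1} and Lemma \ref{p2}. For $n<p$ both sets equal $\mathcal{SP}(n)$, and for $n=p$ both equal $\mathcal{SP}(n)\setminus\{(n)\}$, so $\mathcal{RP}_p(n)=\mathcal{CP}^{\mathsf{s}}_p(n)$ whenever $n\leq p$.

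For the ``only if'' direction, assume $n>p$. Since $\mathcal{CP}^{\mathsf{s}}_p(n)\subseteq\mathcal{RP}_p(n)$ always holds, it suffices to exhibit one $\la\in\mathcal{RP}_p(n)$ outside $\mathcal{CP}^{\mathsf{s}}_p(n)$. The simplest test is to make $\la_1$ too large. Every element of $\mathcal{CP}^{\mathsf{s}}_p(n)$ is strict and has $\xi_1\leq p-1$: indeed either $\xi_1=p-u$ with $u\geq 1$, or $\xi_1\leq\frac{p+1}{2}\leq p-1$ because $p\geq 3$. So I want a $p$-restricted $p$-strict partition of $n$ with either $\la_1\geq p$ or a repeated part.

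I would take $\la=(p-1,n-p+1)$ when $n-p+1\leq p-1$, and otherwise use a greedy partition with parts at most $p-1$ and a repeated part. The cleanest uniform choice is probably to write $n=q(p-1)+r$ with $0\leq r<p-1$ and $q\geq 1$, and set $\la=((p-1)^q,r)$. This is not $p$-strict when $q\geq 2$, since $p\nmid p-1$, so it fails. A better choice is $\la=(p,1^{n-p})$, which again fails strictness when $n-p\geq 2$. The best option is to use parts divisible by $p$, which are allowed to repeat. Write $n=mp+r$ with $m\geq 1$ and $0\leq r<p$, and take $\la=(p^m,r)$ (omitting $r$ if $r=0$). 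This is $p$-strict, because the only repeated part is $p$. It is also $p$-restricted: the differences between consecutive copies of $p$ are $0<p$; the step from the last $p$ to $r$ is $p-r<p$ when $r>0$; and the last difference $r-0\leq p$ is fine since $p\nmid r$. If $r=0$, the last step is $p-0=p$ with $p\mid p$, which violates restrictedness. In that case, with $n=mp$ and $m\geq 2$, I use $\la=(p^{m-1},p-1,1)$ instead. Its differences are $0$, $1$, $p-2$ and $1$, all admissible, since $p\geq 3$ gives $p-1>1$. In every case $\la_1=p>p-1$, so $\la\notin\mathcal{CP}^{\mathsf{s}}_p(n)$, and hence $\mathcal{CP}^{\mathsf{s}}_p(n)\subsetneq\mathcal{RP}_p(n)$.

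For the ``hence'' statement, I combine this with Theorem \ref{thm: Klesh}, which says the $M^\la$ for $\la\in\mathcal{RP}_p(n)$ are all the irreducibles, and with the classification of completely splittable modules as the $V^\xi\cong M^\xi$ for $\xi\in\mathcal{CP}^{\mathsf{s}}_p(n)$ from \eqref{eq:VM-iso}. All irreducibles are completely splittable exactly when the two parametrizing sets coincide, since the $M^\la$ are pairwise non-isomorphic. The only delicate point is the $r=0$ edge case in the construction above, which the alternative partition handles.
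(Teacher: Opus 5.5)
Your proof is correct and follows essentially the same route as the paper. The ``if'' direction comes from Lemmas \ref{p1} and \ref{p2}, and for $n>p$ you use the same witnesses $(p^m,r)$ when $p\nmid n$ and $(p^{m-1},p-1,1)$ when $p\mid n$; these lie in $\mathcal{RP}_p(n)$ but have first part $p>p-1$, so they are not in $\mathcal{CP}^{\mathsf{s}}_p(n)$. Your check of $p$-restrictedness and your justification of the ``hence'' clause via Theorem \ref{thm: Klesh} and \eqref{eq:VM-iso} are more explicit than the paper's, and the abandoned trial constructions can simply be deleted.
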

\begin{proof}
By Lemma \ref{p1} and Lemma \ref{p2}, we have $\mathcal{RP}_p(n)=\mathcal{CP}^{\mathsf{s}}_p(n)$ if $n\leq p$. Now suppose $p< n,$ then $n=ap+b$ for some $a\in \mathbb{Z}_+,$ $0\leq b \leq p-1$. Obviously either $\la=(p^{a},b)$(the case $b\neq 0$) or $\mu=(p^{a-1},p-1,1)$(the case $b=0$ and then $a> 1$) belongs to $\mathcal{RP}_p(n)$. But neither of them belongs to $\mathcal{CP}^{\mathsf{s}}_p(n)$ since $\la_1=\mu_1=p>p-1$. Hence $\mathcal{RP}_p(n)\neq \mathcal{CP}^{\mathsf{s}}_p(n)$. This proves the proposition. 
\end{proof}

\begin{prop}\label{prop:p-standard2}
Suppose $n\leq p$ and $\xi\in\mathcal{CP}^{\mathsf{s}}_p(n)$. Then
\[\sharp \op{Std}^{\mathsf{s}}_p(\xi)=\begin{cases}
\frac{n-2u+1}{n-u}\binom{n-2}{u-1},&\text{ if } \xi=(p-u,u)\text{ for some }1\leq u\leq \frac{p-3}{2},\\
\frac{n!}{\prod h^s_\xi(i,j)}, &\text{ otherwise}.
\end{cases}\]
\end{prop}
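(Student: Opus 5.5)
The plan is to split according to the definition \eqref{eq:p-std} and then reduce the exceptional case to an ordinary shifted hook length count.

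First I determine when the exceptional case in \eqref{eq:p-std} can occur. It requires $\xi_1=p-u$ and $\xi_2=u$, so $\xi_1+\xi_2=p$. Since $\xi_1+\xi_2\leq n\leq p$, this forces $n=p$ and $\xi=(p-u,u)$ with no further rows. Outside this case $\op{Std}^{\mathsf{s}}_p(\xi)=\op{Std}^{\mathsf{s}}(\xi)$, and the second line of the formula is exactly the shifted hook length formula \eqref{eq:hook2}.

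Next I treat $n=p$ and $\xi=(n-u,u)$ with $1\leq u\leq \frac{p-3}{2}$. In the shifted diagram $\xi^{\mathsf{s}}$:
\begin{itemize}
\item the first row occupies the nodes $(1,1),\dots,(1,n-u)$;
\item the second row occupies the nodes $(2,2),\dots,(2,u+1)$;
\item the only corners are $(1,n-u)$ and $(2,u+1)$, because $n-u>u+1$.
\end{itemize}
In a standard shifted tableau the entry $n$ must sit at a corner. Each row is increasing, so $T_{(1,n-u)}$ and $T_{(2,u+1)}$ are the maxima of their rows. Hence the condition $T_{(2,u+1)}>T_{(1,n-u)}$ is equivalent to $T_{(2,u+1)}=n$.

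I then claim that removing the node containing $n$ gives a bijection
\[
\op{Std}^{\mathsf{s}}_p(\xi)\longrightarrow \op{Std}^{\mathsf{s}}\big((n-u,u-1)\big),
\]
in the same way as in the proof of Proposition~\ref{prop:p-standard}. The partition $(n-u,u-1)$ is strict since $n-u>u-1$. For the inverse map, placing $n$ at $(2,u+1)$ gives a standard shifted tableau. This uses that the node $(1,u+1)$ exists, which holds because $n-u\geq u+1$, and that $(2,u)$ is either absent (when $u=1$) or holds a smaller entry.

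Finally I count $\op{Std}^{\mathsf{s}}((a,b))$ with $a=n-u$ and $b=u-1$, using \eqref{eq:hook2}. For a two-row shifted shape the hook lengths are as follows.
\begin{itemize}
\item The first $b$ nodes of row $1$ have hooks that reach into row $2$ via the diagonal rule. Their hook lengths are $a+b,\,a+b-1,\dots,a+1$, but with the value $a-b$ skipped and compensated. The cleaner route is to compute the product directly, which yields the ballot-type formula
\[
\sharp\op{Std}^{\mathsf{s}}((a,b))=\frac{a-b}{a+b}\binom{a+b}{b}.
\]
\item As a sanity check, $(2,1)$ gives $1$ and $(a,0)$ gives $1$.
\item An alternative that avoids the hook bookkeeping is the classical reflection/ballot argument. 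A standard shifted tableau of shape $(a,b)$ is a lattice word with $a$ ones and $b$ twos in which every proper prefix has strictly more ones than twos.
\end{itemize}

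Substituting $a=n-u$ and $b=u-1$ gives
\[
\frac{n-2u+1}{n-1}\binom{n-1}{u-1}=\frac{(n-2u+1)\,(n-2)!}{(u-1)!\,(n-u)!}=\frac{n-2u+1}{n-u}\binom{n-2}{u-1},
\]
as claimed.

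The main obstacle is getting the shifted hook lengths for two-row shapes exactly right. I would cross-check the result against the ballot-number argument, and also against the degenerate case $u=1$, where both sides equal $1$.
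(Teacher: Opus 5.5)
Your proof is correct and follows the paper's argument. You use the same observation that the exceptional case forces $n=p$ and $\xi=(p-u,u)$, and the same reduction $\sharp\op{Std}^{\mathsf{s}}_p(\xi)=\sharp\op{Std}^{\mathsf{s}}((n-u,u-1))$ by forcing $n$ into the node $(2,u+1)$. The one difference is that you count two-row shifted tableaux with Bertrand's ballot theorem, whereas the paper multiplies out the shifted hook lengths.

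Your description of those hook lengths is wrong, though. For the shape $(a,b)$ they are $a+b$ at $(1,1)$, then $a,a-1,\dots,a-b+1$ at $(1,2),\dots,(1,b+1)$, then $a-b-1,\dots,1$ for the rest of row one, and $b,\dots,1$ in row two; their product is $(a+b)\,a!\,b!/(a-b)$. Also, the ballot condition should be imposed on all nonempty prefixes, not on proper ones. Neither slip affects your ballot route, which correctly gives $\frac{a-b}{a+b}\binom{a+b}{b}$.
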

\begin{proof}
Clearly in the case $n<p$ we have $\xi\notin\{(p-u,u,\ldots)\in\mathcal{CP}_p^{\mathsf{s}}(n)\mid 1\leq u\leq \frac{p-3}{2}\}$ and  then the proposition follows from \eqref{eq:p-std} and \eqref{eq:hook2}.  It remains to consider the case $n=p$.  
For any $1\leq u\leq\frac{p-3}{2}$, the only partition satisfying $\xi_1=p-u$ and $\xi_2=u$ is $\xi=(n-u,u)=(p-u,u)$. Then by \eqref{eq:p-std} we have
\begin{equation}\label{eq:Stdp-equal-3}
\op{Std}^{\mathsf{s}}_p(\xi)=\op{Std}^{\mathsf{s}}(\xi)\text{ if }\xi \notin\{(p-u,u,\ldots)\in\mathcal{CP}_p^{\mathsf{s}}(n)\mid 1\leq u\leq \frac{p-3}{2}\}
\end{equation}
and then the proposition follows from \eqref{eq:hook2}. 
Now suppose $\xi=(n-u,u)=(p-u,u)$ with $1\leq u\leq\frac{p-3}{2}$. By \eqref{eq:p-std}, a tableau $T\in\op{Std}^{\mathsf{s}}(\xi)$ belongs to $\op{Std}^{\mathsf{s}}_p(\xi)$ if and only if $T_{(2,u+1)}>T_{(1,n-u)}$, which implies $T_{(2,u+1)}=n$. This leads to 
$
\op{Std}^{\mathsf{s}}_p(\xi)=\{T\in \op{Std}^{\mathsf{s}}(\xi)\mid T_{(2,u+1)}=n\}
$
and hence 
\begin{equation}\label{eq:Stdp-equal-4}
\sharp\op{Std}^{\mathsf{s}}_p(\xi)=\sharp\op{Std}^{\mathsf{s}}(\xi^-),
\end{equation}
where  $\xi^-=(n-u,u-1)\in\mathcal{SP}(n-1)$ due to $n-u>u-1$ as $1\leq u\leq \frac{p-3}{2}$. Now for $\xi^-=(n-u,u-1)$, 
it is straightforward to verify that the hook lengths $h^{\mathsf{s}}_{\xi^-}(i,j)$ of $(i,j)\in (\xi^-)^{\mathsf{s}}$ are $n-1, n-u,n-u-1,\ldots, n-2u+2, n-2u,n-2u-1,\ldots,1, u-1,\ldots, u$ and hence by \eqref{eq:hook2} we obtain 
$$
\sharp\op{Std}^{\mathsf{s}}(\xi^-)=\frac{(n-2u+1)(n-1)!}{(n-1)\cdot (n-u)! (u-1)!}=\frac{n-2u+1}{n-u}\binom{n-2}{u-1}. 
$$
Then the proposition follows from \eqref{eq:Stdp-equal-3}, \eqref{eq:Stdp-equal-4} and \eqref{eq:hook2}. 
\end{proof}
If $\xi\in\mathcal{CP}^{\mathsf{s}}_p(n)$ and $T\in\op{Std}^{\mathsf{s}}_p(\xi)$, we write 
$$
\kappa_T(k)=\mathtt{q}(\text{res}(T(k)))=\text{res}(T(k))(\text{res}(T(k))+1). 
$$
By  \eqref{lem:equiv} and Definition \ref{defn:cs-wt}, we obtain $\text{res}(T(k))\neq\text{res}(T(k+1))$ for $1\leq k\leq n-1$ and hence $\kappa_T(k)\neq \kappa_T(k+1)$. Thus we can introduce the following two well-defined elements in $\mpcn$: 
\begin{equation}\label{eq:XiT}
\Xi_k^T=-\left(\frac{1}{\sqrt{\kappa_T(k)}-\sqrt{\kappa_T(k+1)}}+\frac{c_kc_{k+1}}{\sqrt{\kappa_T(k)}+\sqrt{\kappa_T(k+1)}}\right)
\end{equation}
and 
\begin{equation}\label{eq:OmT}
\Omega_k^T=\sqrt{1-\frac{2(\kappa_T(k)+\kappa_T({k+1}))}{(\kappa_T(k)-\kappa_T({k+1}))^2}}
\end{equation}
for $1\leq k\leq n-1$.  
Then we are ready to introduce our main result.
\begin{thm}
Suppose $n=p$. Let $\xi\in\mathcal{RP}_p(n)=\mathcal{CP}^{\mathsf{s}}_p(n)$. Then 
\\
(1) If $ \xi\notin\{ (p-u,u)\mid 1\leq u\leq \frac{p-3}{2}\}$, then there exists a  $\mathcal{Y}_n$-module isomorphism 
\begin{equation*}
M^\xi\cong V^\xi\cong \oplus_{T\in\op{Std}^{\mathsf{s}}(\xi)}L(\underline{i}_T)
\end{equation*}
with the action of $\mathcal{C}_n\subset\mpcn$ on $L(\underline{i}_T)$ given via Lemma \ref{lem:irrepPn} and the action of $s_k\in\mathfrak{S}_n$ for $1\leq k\leq n-1$ is given by 
\begin{equation}\label{eq:sk-action-2}
s_k z=\left\{
\begin{array}{ll}
\Xi_k^T z+\Omega_k^Tz^{s_k},&\text{ if }s_kT\in\op{Std}^{\mathsf{s}}(\xi),\\
&\\
\Xi_k^T z,&\text{ otherwise}, 
\end{array}
\right.
\end{equation}
for each $z\in L(\underline{i}_T)$ and $T\in\op{Std}^{\mathsf{s}}(\xi)$. Moreover 
\begin{equation}\label{eq:dim-M-1}
\dim M^\xi=\frac{2^{n-\lfloor \frac{\ell(\xi)}{2}\rfloor}n!}{\prod_{(i,j)\in\xi^{\mathsf{s}}} h^{\mathsf{s}}_\xi(i,j)}. 
\end{equation}
\\
(2) If $\xi=(p-u,u)$ for some $1\leq u\leq\frac{p-3}{2}$, then there exists a  $\mathcal{Y}_n$-module isomorphism 
satisfies
\begin{equation*}
M^\xi\cong V^\xi\cong \oplus_{T\in\op{Std}^{\mathsf{s}}_p(\xi)}L(\underline{i}_T)
\end{equation*}
with the action of $\mathcal{C}_n\subset\mpcn$ on $L(\underline{i}_T)$ given via Lemma \ref{lem:irrepPn} and the action of $s_k\in\mathfrak{S}_n$ for $1\leq k\leq n-1$ is given by 
\begin{equation}\label{eq:sk-action-3}
s_k z=\left\{
\begin{array}{ll}
\Xi_k^T z+\Omega_k^Tz^{s_k},&\text{ if }1\leq k\leq n-2, s_kT\in\op{Std}^{\mathsf{s}}_p(\xi),\\
&\\
\Xi_k^T z,&\text{ if }1\leq k\leq n-2, s_kT\notin\op{Std}^{\mathsf{s}}_p(\xi),\\
&\\
\Xi_{n-1}^T z, &\text{ if }k=n-1. 
\end{array}
\right.
\end{equation}
for each $z\in L(\underline{i}_T)$ and $T\in\op{Std}^{\mathsf{s}}_p(\xi)$, where $\Xi_{n-1}^T$ satisifies
\begin{equation}\label{eq:sk-action-4}
\Xi^T_{n-1}=\left\{
\begin{array}{ll}
(\frac{1}{\sqrt{u(u+1)}-\sqrt{u(u-1)}}+\frac{c_{n-1}c_n}{\sqrt{u(u+1)}+\sqrt{u(u-1)}}),&\text{ if } T_{(1,n-u)}=n-1 ,\\
&\\
(\frac{1}{\sqrt{(u-1)(u-2)}-\sqrt{u(u-1)}}+\frac{c_{n-1}c_n}{\sqrt{(u-1)(u-2)}+\sqrt{u(u-1)}}),&\text{ if }T_{(2,u)}=n-1. 
\end{array}
\right.
\end{equation}
 Moroever 
\begin{equation}\label{eq:dim-M-2}
\dim M^\xi=\frac{2^{n-\lfloor \frac{\ell(\xi)}{2}\rfloor}(n-2u+1)}{n-u}\binom{n-2}{u-1}. 
\end{equation}
In addition, the set $\{V^\xi\mid \xi\in\mathcal{CP}_p^{\mathsf{s}}(n)=\mathcal{SP}(n)\setminus(n)\}$ is a complete set of non-isomorphic irreducible $\mathcal{Y}_n$-modules. 
\end{thm}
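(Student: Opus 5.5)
The plan is to assemble the statement from results already in place, mirroring the proof of the corresponding theorem for $\mathfrak{S}_p$.

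\textbf{Step 1: reduce to completely splittable modules.} Since $n=p$, Lemma \ref{p1}, Lemma \ref{p2} and Proposition \ref{prop:RP-CP} give $\mathcal{RP}_p(n)=\mathcal{CP}^{\mathsf{s}}_p(n)=\mathcal{SP}(n)\setminus\{(n)\}$. So every irreducible $\mathcal{Y}_n$-module is completely splittable. By \eqref{eq:VM-iso} we have $M^\xi\cong V^\xi$, and by Theorem \ref{thm: Klesh} together with Theorem 6.8 of \cite{Wa} the family $\{V^\xi\}$ is a complete irredundant list. This settles the final assertion.

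\textbf{Step 2: the explicit model.} Fix $T_0\in\op{Std}^{\mathsf{s}}_p(\xi)$. Then $V^\xi=\oplus_{\tau\in P_{\underline{i}_{T_0}}}L(\underline{i}_{T_0})^\tau$. By Lemma \ref{lem:phi} and Lemma \ref{lem:equiv}, $\tau\mapsto\tau\cdot\underline{i}_{T_0}$ is a bijection onto $\{\underline{i}_S\mid S\in\op{Std}^{\mathsf{s}}_p(\xi)\}$. Distinct $p$-standard tableaux should have distinct residue sequences, since a residue sequence determines a standard shifted tableau by successive addition of nodes within the given shape; I would check this briefly. Using \eqref{eq:psi}, I replace each summand $L(\underline{i}_{T_0})^\tau$ by $L(\underline{i}_S)$, which gives $V^\xi\cong\oplus_{S}L(\underline{i}_S)$ as $\mpcn$-modules.

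Next I transport \eqref{actionformula} through these identifications. On $L(\underline{i}_S)$, $x_k^2$ acts by $\kappa_S(k)$. Since \eqref{eq:P1-irrep} fixes $x_k=\pm\sqrt{\kappa_S(k)}$ with $c_k$ swapping the signs, the operator \eqref{Deltak} becomes $\Xi_k^S$ of \eqref{eq:XiT}, and \eqref{omegak} becomes $\Omega_k^S$. This step requires a short computation on the even-type basis vectors, checking that the sign-dependent terms recombine into the stated $c_kc_{k+1}$ form. Admissibility of $s_k$ for $\underline{i}_S$ should correspond to $s_kS\in\op{Std}^{\mathsf{s}}_p(\xi)$, with $z^{s_k}$ landing in $L(\underline{i}_{s_kS})$. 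In case (1), where \eqref{eq:Stdp-equal-3} gives $\op{Std}^{\mathsf{s}}_p=\op{Std}^{\mathsf{s}}$, this yields \eqref{eq:sk-action-2} directly.

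\textbf{Step 3: case (2).} Here $\xi=(p-u,u)$, and every $S$ has $S_{(2,u+1)}=n$ by the proof of Proposition \ref{prop:p-standard2}. Thus $n-1$ sits either at $(1,n-u)$ or at $(2,u)$, and $s_{n-1}S$ is never $p$-standard. So $s_{n-1}$ acts by $\Xi_{n-1}^S$ alone, and the displayed formula \eqref{eq:sk-action-4} follows by substituting residues.
\begin{itemize}
\item Node $(2,u+1)$ has shifted residue pattern value $u-1$, so $\kappa=u(u-1)$.
\item Node $(1,n-u)$ has residue $u$, since $n-u>(p-1)/2$ and the pattern is reflected; so $\kappa=u(u+1)$.
\item Node $(2,u)$ has residue $u-2$, so $\kappa=(u-1)(u-2)$.
\end{itemize}
The remaining step is reconciling the overall sign convention in \eqref{eq:XiT} with \eqref{eq:sk-action-4}. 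I expect this sign and square-root bookkeeping, including the choice of branch for $\sqrt{\kappa}$ in \eqref{eq:P1-irrep}, to be the main obstacle.

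\textbf{Step 4: dimensions.} By Lemma \ref{lem:irrepPn}, each $L(\underline{i}_S)$ has dimension $2^{n-\lfloor\gamma_0/2\rfloor}$. Here $\gamma_0=\ell(\xi)$, because residue $0$ occurs exactly at the first node of each row. Since $n=p$, the pattern \eqref{eq:residue} returns to $0$ only after $p$ nodes in a row, and rows have length $<p$, so no other node has residue $0$. Multiplying by $\sharp\op{Std}^{\mathsf{s}}_p(\xi)$ from Proposition \ref{prop:p-standard2} gives \eqref{eq:dim-M-1} and \eqref{eq:dim-M-2}.
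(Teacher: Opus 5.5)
Your outline follows the paper's proof step for step. It uses $\mathcal{RP}_p(n)=\mathcal{CP}^{\mathsf{s}}_p(n)$ with $M^\xi\cong V^\xi$, identifies $V^\xi\cong\bigoplus_S L(\underline{i}_S)$ through Lemmas \ref{lem:phi}, \ref{lem:equiv} and \eqref{eq:psi}, and transports \eqref{actionformula}. It then uses the residues $u-1$, $u$, $u-2$ to govern $s_{n-1}$ in case (2), and counts dimensions via Lemma \ref{lem:irrepPn} and Proposition \ref{prop:p-standard2}. Your side checks are correct and make explicit what the paper leaves implicit: $\gamma_0=\ell(\xi)$ because every row has length at most $p-1$, and for $n\le p$ a residue sequence determines a standard tableau of the given shape.

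The step that fails is the computation you defer in Step 2: that \eqref{Deltak} recombines into the scalar-coefficient element $\Xi^S_k\in\mathcal{C}_n$ of \eqref{eq:XiT} on all of $L(\underline{i}_S)$.
\begin{itemize}
\item On $L(\underline{i}_S)$, \eqref{Deltak} is the operator $-\big((x_k-x_{k+1})^{-1}+c_kc_{k+1}(x_k+x_{k+1})^{-1}\big)$, with the functions of $x$ acting first.
\item On a vector $z$ with $x_kz=\sqrt{\kappa_S(k)}\,z$ and $x_{k+1}z=\sqrt{\kappa_S(k+1)}\,z$, this is indeed $\Xi^S_kz$.
\item But $c_kc_{k+1}$ anticommutes with both $x_k$ and $x_{k+1}$. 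So on $z'=c_kc_{k+1}z$ both eigenvalues are negated, and the operator gives $-\Xi^S_kz'$.
\item On mixed-sign vectors the two denominators trade places (up to sign).
\end{itemize}
Hence no fixed Clifford element with scalar coefficients gives the action on the whole summand. You must either keep $x_k,x_{k+1}$ as operators, or state the formula only on the vector where every $x_j$ acts by $+\sqrt{\kappa_S(j)}$; the action on the rest is then forced by $s_kc_j=c_{s_k(j)}s_k$.

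The same computation disposes of your Step 3 ``obstacle''. Substituting $\kappa_T(n-1)\in\{u(u+1),(u-1)(u-2)\}$ and $\kappa_T(n)=u(u-1)$ into \eqref{eq:XiT} yields exactly minus the right-hand side of \eqref{eq:sk-action-4}. In other words, the printed formula is the action on vectors where $x_{n-1},x_n$ act by the negative roots. This is not a branch choice you can adjust, because the same roots $\sqrt{\mathtt{q}(i)}$ from \eqref{eq:P1-irrep} enter both formulas.

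The paper's proof only asserts these two points: the action ``follows from \eqref{actionformula}'', and \eqref{eq:sk-action-4} ``is verified'' by \eqref{eq:XiT}. So the defect lies in the displayed formulas rather than in your strategy. Still, you should not try to prove them literally; prove the corrected versions instead.
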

\begin{proof}
Firstly, by Proposition \ref{prop:RP-CP} and \eqref{eq:VM-iso} we have $\mathcal{RP}_p(n)=\mathcal{CP}_p^{\mathsf{s}}(n)=\mathcal{SP}(n)\setminus(n)$ and $M^\xi\cong V^\xi$ for each $\xi\in\mathcal{RP}_p(n)$. 
 Observe that each permutation $\tau$ naturally acts on an arbitrary tableau of shifted Young diagram $\xi^{\mathsf{s}}$ to a get a new tableau $\tau\cdot S$ by permuting the entries in $S$ and moreover 
 \begin{equation}\label{eq:tau-act1}
\tau\cdot \underline{i}_S=\underline{i}_{\tau\cdot S}. 
 \end{equation}
In particular, fixing $T^\xi\in \op{Std}^{\mathsf{s}}_p(\xi)$,  one can obtain $\underline{i}_{\tau\cdot T^\xi}\in\Lambda_{\underline{i}_{T^\xi}}$ for each $\tau\in P_{\underline{i}_{T^\xi}}$ by Lemma \ref{lem:phi} and then by Lemma \ref{lem:equiv}
we have $\tau\cdot T^\xi\in \op{Std}^{\mathsf{s}}_p(\xi)$. This leads to 
$$
\op{Std}^{\mathsf{s}}_p(\xi)=\{\tau\cdot T^\xi\mid \tau\in P_{\underline{i}_{T^\xi}}\}. 
$$
and moreover 
 \begin{equation}\label{eq:P-equiv}
P_{\underline{i}_{T^\xi}}=\{\tau=s_{k_t}\cdots s_{k_2}s_{k_1}~\mid~ s_{k_a}\cdots s_{k_2}s_{k_1}\cdot T^\xi\in\op{Std}^{\mathsf{s}}_p(\xi) \text{ for }1\leq a\leq t\}
 \end{equation}
This together with \eqref{eq:tau-act1} and Remark \ref{rem:Ltau} leads to 
$$
V^{\underline{i}_{T^\xi}}=\oplus_{\tau\in P_{\underline{i}_{T^\xi}}}L(\underline{i}_{T^\xi})^\tau\cong \oplus_{T\in \op{Std}^{\mathsf{s}}_p(\xi)}L(\underline{i}_T)
$$
Moreover, for each $z\in L(\underline{i}_{T^\xi})$ and $\tau\in P_{\underline{i}_{T^\xi}}$, we have $z^\tau\in L(\underline{i}_{T^\xi})^\tau\cong L(\underline{i}_T)$ with $T=\tau\cdot T^\xi$ by \eqref{eq:tau-act1} and moreover if $s_k$ is admissible with respect to $\tau\cdot \underline{i}_{T^\xi}$ then 
\begin{equation}\label{eq:z-sktau}
z^{s_k\tau}=(z^{\tau})^{s_k}
\end{equation} by using the twist action in Remark \ref{rem:Ltau}. Then by the isomorphism $\psi^\tau_{T^\xi}: L(\underline{i}_{T^\xi})^\tau\rightarrow L(\underline{i}_T)$ in \eqref{eq:psi} with $T=\tau\cdot T^\xi$, we can identify $z^\tau$ with its image in $L(\underline{i}_T)$ and then the action of $s_k$ in \eqref{eq:sk-action-2} follows from \eqref{actionformula} in Theorem \ref{thm:Classficiation}. In addition, the dimensional formula \eqref{eq:dim-M-1} is due to Lemma \ref{lem:irrepPn} and Proposition \ref{prop:p-standard2}. This together \eqref{eq:VD-iso} and \eqref{eq:VM-iso}  with proves part (1). 

One can prove the action formula \eqref{eq:sk-action-3} and the dimension formula \eqref{eq:dim-M-2} in part (2) applying the same approach with some extra computation need to verify \eqref{eq:sk-action-4} for the action of $s_{n-1}$. Actually, if $\xi=(p-u,u)$ for some $1\leq u\leq \frac{p-3}{2}$, then  $T_{(2,u+1)}=n$ since any $T\in\op{Std}^{\mathsf{s}}_p(\xi)$ satisfies $T_{(2,u+1)}>T_{(1,p-u)}$ by \eqref{eq:p-std}. This means $s_{n-1}T\notin\op{Std}^{\mathsf{s}}_p(\xi)$. Then by  \eqref{actionformula} in Theorem \ref{thm:Classficiation} we obtain 
$$
s_{n-1}z=\Xi_{n-1}^Tz
$$
for each $z\in L(\underline{i}_T)$ and $T\in\op{Std}^{\mathsf{s}}_p(\xi)$. Since $T_{(2,u+1)}=n$, one can deduce that either  $T_{(2,u)}=n-1$ with $u\geq 2$ or $T_{(1,p-u)}=n-1$. This means $\text{res}(T(n))=u-1\in\I$ and $\text{res}(T(n-1))=u-2\in \I$ with $u\geq 2$ or  $\text{res}(T(n-1))=u\in \I$ according to the residue labeling pattern defined in \eqref{eq:residue}. Then by \eqref{eq:XiT} the equation \eqref{eq:sk-action-4} is verified. This proves the theorem. 
\end{proof}
}

\end{document}